\newtheorem{theorem}{Theorem}
\newtheorem{proposition}[theorem]{Proposition}
\newtheorem{definition}[theorem]{Definition}
\newtheorem{example}[theorem]{Example}
\numberwithin{equation}{section}
\newtheorem{conjecture}[theorem]{Conjecture}
\newtheorem{remark}[theorem]{Remark}
\renewcommand{\labelenumi}{(\roman{enumi})}
\def\R{{\mathbb R}}
\def\C{{\mathbb C}} 
\def\Z{{\mathbb Z}}
\def\gog{{\mathfrak g}} \def\tot{{\mathfrak t}}
\def\U{{\rm U}}
\begin{document}

% Enter full title and short title for running headers
\title{Majorization and Spherical Functions}
%\shorttitle{Majorization and Spherical Functions}

% Enter the publication year and the ID number of the paper
%\volumeyear{2021}
%\paperID{rnn999}

% Author name(s)
\author{Colin McSwiggen and Jonathan Novak}
% Abbreviated author name for running headers
%\abbrevauthor{C. McSwiggen and J. Novak}
% Abbreviated author name for first page header
%\headabbrevauthor{McSwiggen, C., and J. Novak}

%\address{%
%\affilnum{1}Department of Mathematical Sciences, University of Tokyo
%and
%\affilnum{2}Department of Mathematics, UC San Diego}

% Address / e-mail address of corresponding author
%\correspdetails{csm@ms.u-tokyo.ac.jp}

% Received/revised/accepted dates will be entered by the publisher during production of an accepted paper. Please do not edit these placeholders for submission.
%\received{1 Month 20XX}
%\revised{11 Month 20XX}
%%\accepted{21 Month 20XX}

% Enter details of editor communicating this article
%\communicated{A. Editor}

\begin{abstract}
In this paper, we generalize a result of Cuttler, Greene, Skandera, and Sra that characterizes the majorization order
on Young diagrams in terms of nonnegative specializations of Schur polynomials. More precisely,
we introduce a generalized notion of majorization associated to an arbitrary crystallographic root system $\Phi,$ and show that it admits a natural characterization in terms of the values of spherical functions on any Riemannian symmetric space with restricted root system $\Phi.$  We also conjecture a further generalization of this theorem in terms of Heckman--Opdam hypergeometric functions.
\end{abstract}

\maketitle

\section{Introduction}
Given a positive integer $d$ and a pair of Young diagrams $\lambda,\mu \vdash d,$ we write $\lambda \succeq \mu$ if,
for any $r \geq 1,$ the number of cells in the first $r$ rows of $\lambda$ is at least as large as the number of cells in the 
first $r$ rows of $\mu.$ This relation --- which is known as \emph{majorization} --- defines a partial order on the set of Young diagrams with
$d$ cells. Given a Young diagram $\lambda$ and a positive integer $N,$
a Young tableau of shape $\lambda$ and rank $N$ is a function $T$ on the cells of $\lambda$ which takes values in
$\{1,\dots,N\}.$ We say that $T$ is \emph{semistandard} if it is weakly increasing along rows and strictly increasing along
columns. If $\lambda \vdash d \leq N,$ then the set $\mathrm{SSYT}(\lambda,N)$ of semistandard Young tableaux of shape $\lambda$
and rank $N$ is nonempty, and the \emph{normalized} Schur polynomials

	\begin{equation*}
		S_\lambda(x_1,\dots,x_N) = \frac{1}{|\mathrm{SSYT}(\lambda,N)|} \sum_{T \in \mathrm{SSYT}(\lambda,N)} 
		\prod_{j=1}^N x_j^{|T^{-1}(j)|}, \qquad \lambda \vdash d,
	\end{equation*}

\noindent
form a basis of the space of homogeneous symmetric polynomials of degree $d$ in $N$ variables. 
The starting point of this paper is
a result of Cuttler--Greene--Skandera \cite{CGS} and Sra \cite{Sra},\footnote{Ait-Haddou and Mazure have also given a different proof of this theorem via the theory of blossoms \cite{AH-M}.} which states that the polynomials $S_\lambda$ characterize the majorization order in the following sense.
	
	\begin{theorem}
	\label{thm:CGSS}
	For any $\lambda,\mu \vdash d \leq N,$ we have $\lambda \succeq \mu$ if and only if
	
		\begin{equation*}
			S_\lambda(x_1,\dots,x_N) \geq S_\mu(x_1,\dots,x_N) \quad \textrm{for all } \, x_1,\dots,x_N \in \mathbb{R}_{\geq 0}.
		\end{equation*}
	\end{theorem}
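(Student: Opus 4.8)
The plan is to prove the two implications by quite different means. The reverse implication (the inequality forces majorization) is the easier one, and I would dispatch it by an asymptotic test-point argument. The forward implication ($\lambda \succeq \mu$ forces $S_\lambda \geq S_\mu$) is the substantial one; here I would expand $S_\lambda$ in the basis of normalized symmetric means and reduce the inequality, via Muirhead's inequality, to a purely combinatorial comparison of the resulting coefficient distributions.

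For the reverse implication I argue by contrapositive. Suppose $\lambda \not\succeq \mu$, so there is an $r \geq 1$ with $\lambda_1 + \cdots + \lambda_r < \mu_1 + \cdots + \mu_r$. I would evaluate both polynomials along the ray $x = (t,\dots,t,1,\dots,1)$, with $t$ repeated $r$ times, and send $t \to \infty$. For a semistandard tableau $T$, the exponent of $t$ in $\prod_j x_j^{|T^{-1}(j)|}$ is the number of cells of $T$ carrying a value $\leq r$, and the column-strict condition forces this to be at most the number of cells in the first $r$ rows of $\lambda$, namely $\lambda_1 + \cdots + \lambda_r$; this bound is attained (e.g. by the tableau whose $i$-th row is constant equal to $i$). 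Hence $S_\lambda(t,\dots,t,1,\dots,1)$ is a polynomial in $t$ of degree exactly $\lambda_1 + \cdots + \lambda_r$ with positive leading coefficient, and likewise for $\mu$. Since $\mu$'s degree is strictly larger, $S_\mu$ eventually dominates $S_\lambda$ along this ray, contradicting $S_\lambda \geq S_\mu$ on $\mathbb{R}_{\geq 0}^N$.

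For the forward implication I would first record the expansion $s_\lambda = \sum_\alpha K_{\lambda\alpha} m_\alpha$ into monomial symmetric polynomials, the sum running over partitions $\alpha \preceq \lambda$ (the Kostka numbers $K_{\lambda\alpha}$ count semistandard tableaux of shape $\lambda$ and content $\alpha$). Writing $m_\alpha = |\mathcal{O}_\alpha|\,[\alpha]$, where $\mathcal{O}_\alpha$ is the $S_N$-orbit of $\alpha$ and $[\alpha] = \frac{1}{N!}\sum_{\sigma \in S_N} x_{\sigma(1)}^{\alpha_1}\cdots x_{\sigma(N)}^{\alpha_N}$ is the associated symmetric mean, and dividing by $|\mathrm{SSYT}(\lambda,N)| = \sum_\alpha K_{\lambda\alpha}|\mathcal{O}_\alpha|$, I obtain
\[
S_\lambda = \sum_{\alpha \preceq \lambda} w^\lambda_\alpha\,[\alpha], \qquad w^\lambda_\alpha = \frac{K_{\lambda\alpha}\,|\mathcal{O}_\alpha|}{|\mathrm{SSYT}(\lambda,N)|},
\]
so that $S_\lambda$ is a convex combination of symmetric means indexed by partitions $\preceq \lambda$; concretely, $w^\lambda$ is the law of the sorted content of a uniformly random element of $\mathrm{SSYT}(\lambda,N)$. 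Muirhead's inequality says precisely that $\alpha \mapsto [\alpha](x)$ is nondecreasing along the dominance order for each fixed $x \in \mathbb{R}_{\geq 0}^N$. Consequently, writing $S_\lambda - S_\mu = \sum_\alpha (w^\lambda_\alpha - w^\mu_\alpha)[\alpha]$, it suffices by the standard up-set characterization of stochastic dominance to prove that $w^\lambda$ stochastically dominates $w^\mu$, i.e. that $\sum_{\alpha \in U} w^\lambda_\alpha \geq \sum_{\alpha \in U} w^\mu_\alpha$ for every up-set $U$ in the dominance order on partitions of $d$.

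The main obstacle is exactly this stochastic-dominance statement, a genuinely combinatorial fact about how Kostka numbers redistribute mass when a shape is enlarged in dominance order. To make it tractable I would reduce to covering relations: since dominance is the transitive closure of elementary box-moves and both ``$S_\lambda \geq S_\mu$ pointwise'' and stochastic dominance are transitive, it is enough to treat the case in which $\mu$ is obtained from $\lambda$ by moving a single box from a higher row to a lower row. For such a move I would seek an explicit weight-monotone injection $\mathrm{SSYT}(\mu,N) \hookrightarrow \mathrm{SSYT}(\lambda,N)$ --- equivalently a monotone coupling of $w^\mu$ into $w^\lambda$ --- whose existence yields the up-set inequalities by Strassen's theorem. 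Constructing this coupling, and verifying that it raises the sorted content in the dominance order, is where the real work lies; the asymptotic and Muirhead ingredients above are comparatively routine.
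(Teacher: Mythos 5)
Your reverse implication is correct and complete: the degree count along $(t,\dots,t,1,\dots,1)$, using that a semistandard tableau has at most $\lambda_1+\cdots+\lambda_r$ entries $\le r$ with equality attained, is a sound (and more elementary) discrete counterpart of the asymptotics-plus-separation argument the paper runs for spherical functions. The genuine gap is in the forward implication, where what you have written is not a proof but a reduction. The expansion $S_\lambda = \sum_\alpha w^\lambda_\alpha [\alpha]$ and the appeal to Muirhead are fine, but everything then hangs on the claim that $w^\lambda$ stochastically dominates $w^\mu$ in dominance order whenever $\lambda \succeq \mu$, which you explicitly leave unproved (``where the real work lies''). Observe moreover that this claim is \emph{strictly stronger} than Theorem \ref{thm:CGSS} itself: stochastic dominance on a finite poset is equivalent to $\sum_\alpha w^\lambda_\alpha f(\alpha) \ge \sum_\alpha w^\mu_\alpha f(\alpha)$ for \emph{all} dominance-monotone $f$, while the theorem needs this only for the special family $f = [\cdot](x)$, $x \in \mathbb{R}_{\ge 0}^N$; up-set indicators are not limits of Muirhead means. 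So you have reduced a known theorem to an unproven and a priori harder combinatorial conjecture about Kostka numbers (it checks out in small cases, but no proof is offered).

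Worse, the specific instrument you propose for the covering-relation case provably fails. A weight-monotone injection $\mathrm{SSYT}(\mu,N) \hookrightarrow \mathrm{SSYT}(\lambda,N)$ is \emph{not} equivalent to a monotone coupling: the two measures are uniform on sets of different cardinalities, so Strassen's theorem in general demands a fractional coupling. And the injection often cannot exist at all: take $\lambda = (3)$, $\mu = (2,1)$, $N = 5$ (a single box move, so exactly your reduced case); the hook content formula gives $|\mathrm{SSYT}(\lambda,5)| = s_{(3)}(1^5) = \binom{7}{3} = 35$ while $|\mathrm{SSYT}(\mu,5)| = s_{(2,1)}(1^5) = 40$, so there is no injection of any kind from $\mathrm{SSYT}(\mu,N)$ into $\mathrm{SSYT}(\lambda,N)$. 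The varying normalizations $s_\lambda(1^N)$ are precisely the crux, and no unnormalized injection or bijection statement can capture them. For contrast, the paper's proof of the hard direction avoids combinatorics entirely: it realizes $S_\lambda(e^{y_1},\dots,e^{y_N})$ as a spherical function $\phi^+_\lambda$ via (\ref{eqn:schur-char})--(\ref{eqn:spherical-schur}), continues to the noncompact dual, and shows via the Harish-Chandra integral representation (\ref{eqn:hc-integral-spherical}) and the AM--GM inequality that $\lambda \mapsto \phi^-_{i\lambda}(x)$ is midpoint convex, hence convex; being also $W$-invariant, it is $W$-convex by \cite[Theorem 1]{GW}, which is exactly the monotonicity along the majorization order. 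To salvage your route you would need to prove the up-set inequalities for the normalized Kostka weights directly; alternatively, replace the stochastic-dominance step by such a convexity-in-$\lambda$ argument.
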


Quite recently, Khare and Tao \cite{KhTao, KhTao2} have provided an analytic
extension of Theorem \ref{thm:CGSS} by showing that the functions $S_\lambda$ can also be used to 
characterize \emph{weak} majorization, i.e.~the preorder obtained by dropping the condition $|\lambda|=|\mu|$ in 
the definition of majorization. The present paper provides a geometric generalization of Theorem \ref{thm:CGSS}:
we consider a new notion of majorization associated to the Weyl group of a given root system\footnote{In this paper all root systems are assumed to be crystallographic, but not necessarily reduced.} $\Phi$, and show that it is characterized by a pointwise inequality 
for spherical functions on any Riemannian symmetric space with restricted root system $\Phi$. 

The paper is organized as follows.

In Section \ref{sec:majorization}, we introduce the notion of Weyl group majorization and prove our main result in the special 
case of spherical functions on the Lie algebra $\gog$ of a compact group $G$ (Theorem \ref{thm:H-W-convexity}).  We treat this case separately 
because spherical functions reduce to Harish-Chandra orbital integrals in this setting, leading to a very natural and direct 
generalization of Theorem \ref{thm:CGSS}. Moreover, the proofs in the Lie algebra case do not require a discussion of symmetric spaces, 
and are thus somewhat more elementary. 

In Section \ref{sec:symspaces}, we treat the general case of spherical functions on a Riemannian symmetric space of 
non-compact type, and prove our main result (Theorem \ref{thm:spherical-majorization}).  
We then use this theorem to deduce majorization-characterizing inequalities for spherical functions on symmetric spaces of 
Euclidean type (Proposition \ref{prop:euclidean-majorization}) and compact type (Proposition \ref{prop:compact-majorization}).
Furthermore, we discuss how the result for the compact case implies inequalities 
for various families of orthogonal polynomials, such as Schur polynomials.  

In Section \ref{sec:conjecture}, we develop an even more general framework based on 
Heckman--Opdam hypergeometric functions \cite{RSHF1}.  We state a conjectural characterization of majorization in this context,
and prove one direction of this conjecture. Moreover, we show that the full conjecture holds in rank one, where it reduces to an 
inequality for the classical Gauss hypergeometric function.

\section{Lie Algebras}
\label{sec:majorization}

Let $G$ be a connected, compact Lie group with Lie algebra $\gog$.  Let $T \subset G$ be a maximal torus, $\tot = \mathrm{Lie}(T) \subset \gog$ the corresponding Cartan subalgebra, and $\Phi$ the root system of $\gog$ with respect to $\tot$.  Fix a choice of positive roots $\Phi^+$, and let $W$ be the Weyl group generated by reflections in the root hyperplanes in $\tot$.

\begin{definition} \label{def:W-majorization} \normalfont
Let $V$ be any vector space on which $W$ acts by reflections, and let $\lambda, \mu \in V$.  We say that $\lambda$ $W$-{\it majorizes} $\mu$, written $\lambda \succeq \mu$, if $\mu$ lies in the convex hull of the Weyl orbit of $\lambda$.
A function $F: V \to \R$ is said to be $W$-{\it convex} if $F(\lambda) \ge F(\mu)$ whenever $\lambda \succeq \mu$. 
\end{definition}

The relation $\succeq$ defines a preorder on $\tot$ and a partial order on each 
Weyl chamber.\footnote{The $W$-majorization preorder is not the same as the height partial order on $\tot$.  If $\lambda, \mu \in \tot$, we say that $\lambda$ is {\it higher} than $\mu$ if $\lambda - \mu$ can be written 
as a linear combination of positive roots with nonnegative coefficients. The resulting order coincides with $W$-majorization when $\lambda$ and $\mu$ are both dominant, but the height partial order is not $W$-invariant.}  
When $G = \mathrm{U}(N),$ so that $W \cong S_N$ and $\tot \cong \R^N$, $W$-majorization coincides with the usual majorization order on vectors, and $W$-convexity coincides with the widely studied notion of Schur-convexity.  In particular, we recover the ordering on Young diagrams by regarding them as vectors with weakly decreasing integer coordinates.
Generalized majorization orders associated to group actions have been studied since the 1960's; see \cite{AOA,EP, GW, Mudh1}. 

In this section, we prove that $W$-majorization may be characterized by comparing the pointwise behavior of Laplace transforms of invariant probability measures on coadjoint orbits of $G$.  
Concretely, choose an invariant inner product $\langle \cdot,\cdot \rangle$ identifying $\gog \cong \gog^*$, and for $\lambda \in \gog$ let $\mathcal{O}_\lambda = \{ \mathrm{Ad}_g \lambda \ | \ g \in G \}$ denote its (co)adjoint orbit.  
Define
\begin{equation} \label{eqn:L-def}
L_{\lambda}(x) = \int_{\mathcal{O}_\lambda} e^{\langle y,x \rangle} dy = \int_G e^{\langle \mathrm{Ad}_g \lambda, x \rangle} \, dg, \qquad \lambda, x \in \tot_\C,
\end{equation}
where $dy$ is the unique invariant probability measure on $\mathcal{O}_\lambda$, $dg$ is the Haar probability measure on $G$, and $\tot_\C \cong \tot \oplus i \tot$ is a 
Cartan subalgebra of the complexified Lie algebra $\gog_\C$.

The functions $L_{\lambda}(x)$ are ubiquitous objects that arise in many different areas of mathematics and physics.  They were originally studied by Harish-Chandra in the context of harmonic analysis on Lie algebras \cite{HC}, 
and they play an important role in the orbit method in representation theory \cite{AK}.  When $G = \U(N)$, the transform $L_\lambda(x)$ is known 
as the Harish-Chandra--Itzykson--Zuber (HCIZ) integral; it has been widely studied in theoretical physics and random matrix theory since the 1980's \cite{IZ}.  More recently, the HCIZ integral
has become an important object in combinatorics \cite{GGN,Novak:New} and probability \cite{BGH,Novak:JSP}.
For further background on these functions and their diverse applications, we refer the reader to \cite{McS1, McS2}.

The main result of this section is the following characterization theorem.

\begin{theorem} \label{thm:H-W-convexity}
For any $\lambda, \mu \in \tot$, the following are equivalent:
\begin{enumerate}
\item $\lambda \succeq \mu$,
\item $L_\lambda(x) \ge L_\mu(x)$ for all $x \in \tot$.
\end{enumerate}
\end{theorem}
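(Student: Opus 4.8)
The plan is to prove the two implications separately, with the forward direction (i) $\Rightarrow$ (ii) resting on a convexity argument and the reverse direction (ii) $\Rightarrow$ (i) on a Laplace-type asymptotic analysis.

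For (i) $\Rightarrow$ (ii), I would first record two structural properties of the map $\lambda \mapsto L_\lambda(x)$ for fixed $x \in \tot$. First, it is $W$-invariant: since $w\lambda$ lies on the same adjoint orbit as $\lambda$ for every $w \in W$, we have $\mathcal{O}_{w\lambda} = \mathcal{O}_\lambda$ and hence $L_{w\lambda}(x) = L_\lambda(x)$. Second, rewriting $L_\lambda(x) = \int_G e^{\langle \lambda, \mathrm{Ad}_{g^{-1}} x\rangle}\, dg$ using invariance of the inner product, the integrand is for each $g$ a convex function of $\lambda$ (an exponential of a linear functional), so $\lambda \mapsto L_\lambda(x)$ is convex as an average of convex functions. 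Now if $\lambda \succeq \mu$, then $\mu = \sum_{w} c_w\, w\lambda$ for some convex weights $c_w$, and Jensen's inequality together with $W$-invariance gives $L_\mu(x) \le \sum_w c_w L_{w\lambda}(x) = L_\lambda(x)$, as desired.

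For the reverse direction I would argue by contraposition: assuming $\lambda \not\succeq \mu$, i.e.\ $\mu \notin \mathrm{conv}(W\lambda)$, I would produce an $x \in \tot$ with $L_\mu(x) > L_\lambda(x)$. Since $\mathrm{conv}(W\lambda)$ is a compact convex polytope, the separating hyperplane theorem yields $x \in \tot$ with $\langle \mu, x\rangle > \max_{w \in W}\langle w\lambda, x\rangle$. The key geometric input is then the identity $\max_{g \in G}\langle \mathrm{Ad}_g\lambda, x\rangle = \max_{w \in W}\langle w\lambda, x\rangle$ for $x, \lambda \in \tot$; this follows from Kostant's convexity theorem, which identifies the orthogonal projection of $\mathcal{O}_\lambda$ onto $\tot$ with $\mathrm{conv}(W\lambda)$, together with the observation that $\langle y, x\rangle$ depends only on the $\tot$-component of $y$ when $x \in \tot$. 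Writing $M_\nu(x) = \max_{g \in G} \langle \mathrm{Ad}_g\nu, x\rangle$, we thus obtain $M_\mu(x) \ge \langle \mu, x\rangle > M_\lambda(x)$.

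To conclude, I would invoke the standard Laplace asymptotic $\tfrac{1}{t}\log L_\nu(tx) \to M_\nu(x)$ as $t \to +\infty$, valid because $\int_G e^{t\langle\mathrm{Ad}_g\nu, x\rangle}\,dg$ is squeezed between $e^{tM_\nu(x)}$ (from the trivial upper bound) and a positive-constant multiple of $e^{t(M_\nu(x)-\epsilon)}$ coming from a neighborhood of a maximizer. Since $M_\mu(x) > M_\lambda(x)$, this forces $L_\mu(tx) > L_\lambda(tx)$ for all sufficiently large $t > 0$, contradicting (ii). The main obstacle lies precisely in this reverse direction: one must bridge the gap between the \emph{static} separation inequality and a genuine pointwise inequality between the transforms, and the mechanism that accomplishes this is the exponential-scale asymptotics, whose validity rests on identifying the growth rate of $L_\nu$ with the support function of the coadjoint orbit via Kostant convexity.
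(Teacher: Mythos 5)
Your proposal is correct, and while your forward direction matches the paper in substance, your reverse direction takes a genuinely different route. For (i) $\Rightarrow$ (ii), the paper proves midpoint convexity of $\lambda \mapsto L_\lambda(x)$ via the AM--GM inequality and then cites Giovagnoli--Wynn (\cite[Theorem 1]{GW}: a $W$-invariant convex function is $W$-convex); you prove convexity directly (the integrand $e^{\langle \lambda, \mathrm{Ad}_{g^{-1}}x\rangle}$ is an exponential of a linear functional, and an average of convex functions is convex) and inline the Giovagnoli--Wynn step as a finite Jensen argument over the convex combination $\mu = \sum_w c_w\, w(\lambda)$ --- the same idea, made self-contained. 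The real divergence is in (ii) $\Rightarrow$ (i). The paper starts from Harish-Chandra's exact formula expressing $L_\lambda(x)$ as an alternating sum $\sum_w \epsilon(w) e^{\langle w(\lambda), x\rangle}$ divided by $\Delta_\gog(\lambda)\Delta_\gog(x)$, extracts the dominant exponential as $t \to \infty$ along a separating direction, and must then handle the singular cases $\Delta_\gog(\lambda) = 0$ or $\Delta_\gog(\mu) = 0$ via the perturbation $\lambda + \eta\rho$ and repeated applications of l'H\^opital's rule (as well as perturbing $x_0$ to ensure $\Delta_\gog(x_0) \neq 0$). You bypass all of this with the soft identification $\lim_{t\to\infty} \frac{1}{t}\log L_\nu(tx) = \max_{w\in W}\langle w(\nu), x\rangle$, obtained from elementary two-sided Laplace bounds (the trivial upper bound since $dg$ is a probability measure, and a lower bound from a positive-measure neighborhood of a maximizer on the compact group $G$) together with Kostant's convexity theorem, of which you only need the containment of the orthogonal projection of $\mathcal{O}_\nu$ onto $\tot$ in $\mathrm{conv}(W\nu)$; the reverse inequality is trivial since $W\nu \subset \mathcal{O}_\nu$, and that containment can alternatively be derived from a critical-point argument ($[\mathrm{Ad}_g\nu, x] = 0$ at a maximizer). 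What each approach buys: yours is cleaner and arguably more elementary for this particular theorem, since the degenerate-discriminant bookkeeping that occupies half the paper's proof simply disappears, and no regularity of $x$, $\lambda$, or $\mu$ is ever needed; the paper's heavier machinery yields sharp asymptotics with explicit polynomial prefactors, and --- more importantly for the authors --- a proof template that transfers to the symmetric-space setting of Theorem \ref{thm:spherical-majorization}, where no Kostant-type convexity statement is directly invoked and the argument instead runs through sharp estimates on spherical functions at infinity.
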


\begin{proof}
We first show (ii) implies (i), by proving the contrapositive.  The {\it discriminant} of $\gog$ is the polynomial $\Delta_\gog(x) = \prod_{\alpha \in \Phi^+} \langle \alpha, x \rangle$, i.e.~the product of the positive roots.\footnote{Here we take the roots to be real-valued linear functionals in $\tot^*$, which we identify with $\tot$ via the inner product.  As a result, our roots differ by a factor of $i$ from those typically used in the setting of complex semisimple Lie algebras.}  Let $x \in \tot$ with $\Delta_\gog(x) \not = 0$.  We assume for now that $\Delta_\gog(\lambda), \Delta_\gog(\mu) \not = 0$ as well; later we will remove this assumption.  The Laplace transform (\ref{eqn:L-def}) admits an exact expression, due to Harish-Chandra \cite{HC}:
\begin{equation} \label{eqn:hc}
L_\lambda(x) = \frac{\Delta_\gog(\rho)}{\Delta_\gog(\lambda) \Delta_\gog(x)} \sum_{w \in W} \epsilon(w) e^{\langle w(\lambda), x \rangle},
\end{equation}
where $\rho = \frac{1}{2} \sum_{\alpha \in \Phi^+} \alpha$ and $\epsilon(w)$ is the sign of $w \in W$.  Taking $t > 0$ and using (\ref{eqn:hc}), we can write
\begin{equation} \label{eqn:first-H-diff}
L_\mu(tx) - L_\lambda(tx) = \frac{\Delta_\gog(\rho)}{\Delta_\gog(tx)} \sum_{w \in W} \epsilon(w) \left( \frac{e^{t \langle w(\mu), x \rangle}}{\Delta_\gog(\mu)} - \frac{e^{t \langle w(\lambda), x \rangle}}{\Delta_\gog(\lambda)} \right).
\end{equation}
This expression is manifestly $W$-invariant in $\lambda$, $\mu$ and $x$, so we may assume without loss of generality that all three are dominant.  Then as $t \to +\infty$ we have:
\begin{equation} \label{eqn:H-asymp}
L_\mu(tx) - L_\lambda(tx) = \frac{\Delta_\gog(\rho)}{\Delta_\gog(tx)} \left( \frac{e^{t \langle \mu, x \rangle}}{\Delta_\gog(\mu)} - \frac{e^{t \langle \lambda, x \rangle}}{\Delta_\gog(\lambda)} \right) + (\textrm{lower-order terms}).
\end{equation}
Now suppose $\lambda \not \succeq \mu$.  Then $\mu$ lies outside the convex hull of the $W$-orbit of $\lambda$, so by the hyperplane separation theorem there is some $x_0 \in \tot$ and $C > 0$ such that $\langle \mu, x_0 \rangle > C$ while $\langle w(\lambda), x_0 \rangle < C$ for all $w \in W$.  By making a small perturbation to $x_0$ if necessary, we can ensure that $\Delta_\gog(x_0) \not = 0$.  Take $x$ in (\ref{eqn:H-asymp}) to be the dominant representative of the Weyl orbit of $x_0$.  Then we still have $\langle \mu, x \rangle > C$, $\langle \lambda, x \rangle < C$, and from (\ref{eqn:H-asymp}) we find:
$$ \lim_{t \to \infty} \Big[ L_\mu(tx) - L_\lambda(tx)  \Big] \ge \lim_{t \to \infty} \frac{\Delta_\gog(\rho)}{\Delta_\gog(tx)} \left( \frac{e^{t \langle \mu, x \rangle}}{\Delta_\gog(\mu)} - \frac{e^{t C}}{\Delta_\gog(\lambda)} \right) = \infty, $$
which implies $L_\mu(tx) > L_\lambda(tx)$ for some $t > 0$, so that (ii) cannot hold.

Now we remove the assumption that $\Delta_\gog(\lambda), \Delta_\gog(\mu) \not = 0$.  In this case the expression (\ref{eqn:first-H-diff}) may be singular, so we instead take a limit:
\begin{equation} \label{eqn:singular-H-diff}
L_\mu(tx) - L_\lambda(tx) = \lim_{\eta \to 0} \frac{\Delta_\gog(\rho)}{\Delta_\gog(tx)} \sum_{w \in W} \epsilon(w) \left( \frac{e^{t \langle w(\mu + \eta \rho), x \rangle}}{\Delta_\gog(\mu + \eta \rho)} - \frac{e^{t \langle w(\lambda + \eta \rho), x \rangle}}{\Delta_\gog(\lambda + \eta \rho)} \right).
\end{equation}
To evaluate this limit, we apply l'H\^opital's rule as many times as needed, treating the $\lambda$ and $\mu$ terms separately.  After $j$ applications to the $\lambda$ terms and $k$ applications to the $\mu$ terms for some $j, k \ge 0$, in place of (\ref{eqn:H-asymp}) we find:
\begin{equation} \label{eqn:singular-H-asymp}
L_\mu(tx) - L_\lambda(tx) = \frac{\Delta_\gog(\rho)}{\Delta_\gog(tx)} \left( \frac{t^k \langle \rho, x \rangle^k e^{t \langle \mu, x \rangle}}{\partial_\rho^k \Delta_\gog(\mu)} - \frac{t^j \langle \rho, x \rangle^j e^{t \langle \lambda, x \rangle}}{\partial_\rho^j \Delta_\gog(\lambda)} \right) +\ (\textrm{lower-order terms}),
\end{equation}
where $\partial_\rho^k \Delta_\gog(\mu) = \frac{d^k}{d \eta^k} \Delta_\gog(\mu + \eta \rho) \big |_{\eta = 0}$. The remainder of the argument then goes through as before, and we conclude that (ii) implies (i).

The other direction of the proof, (i) implies (ii), amounts to showing that for all $x \in \tot$, the function \mbox{$\lambda \mapsto L_\lambda(x)$}
is $W$-convex.  This function is clearly $W$-invariant, and by \mbox{\cite[Theorem 1]{GW},} a $W$-invariant, convex function is $W$-convex.  
It therefore remains only to show that $L_\lambda(x)$ is convex in $\lambda$, and for this it is sufficient to show midpoint convexity.  For $u, v \in \tot$ we have
\begin{multline*}
L_{\frac{1}{2}(u+v)}(x) = \int_{G} e^{ \langle \mathrm{Ad}_g (u + v)/2, x \rangle} dg =  \int_{G} \sqrt{e^{\langle \mathrm{Ad}_g u, x \rangle} e^{\langle \mathrm{Ad}_g v, x \rangle} } \,dg \\
\le \int_{G} \frac{1}{2}\big( e^{\langle \mathrm{Ad}_g u, x \rangle} + e^{\langle \mathrm{Ad}_g v, x \rangle} \big) \, dg = \frac{1}{2}L_u(x) + \frac{1}{2}L_v(x),
\end{multline*}
where in the final line we have applied the inequality of arithmetic and geometric means. This proves the theorem.
\end{proof}

\begin{remark} \normalfont
It is easily verified from the definition (\ref{eqn:L-def}) that $L_\lambda(x) = L_x(\lambda)$, so condition (ii) in Theorem \ref{thm:H-W-convexity} could equivalently be written: $$L_x(\lambda) \ge L_x(\mu) \textrm{ for all } x \in \tot.$$
\end{remark}

\section{Symmetric spaces}
\label{sec:symspaces}

This section contains our main results, which are majorization inequalities for spherical functions on Riemannian symmetric spaces.  After introducing some background on symmetric spaces and spherical functions, we state and prove separate inequalities for each of the three types of irreducible symmetric space.  In each case the theorem takes the form of an inequality between the pointwise values of any two spherical functions, which reflects the $W$-majorization order on the space of vectors that index the spherical functions.  As we explain below, these results imply Theorem \ref{thm:H-W-convexity} and discretizations
thereof, such as the Schur function inequality studied in \cite{CGS,Sra}.

\subsection{Background on symmetric spaces and spherical functions}
\label{sec:symspace-background}

Here we introduce only the minimum background required to state and prove the theorems.  We refer the reader to \cite[Appendices B and C]{OlPe} for a concise introduction to these topics, and to \cite{DS, GGA} for detailed references.  The definitions below mostly follow \cite[ch.~4]{GGA}

\begin{definition} \label{def:symspace} \normalfont
Let $G$ be a connected Lie group, and $K \subset G$ a compact subgroup.  We say that $(G, K)$ is a {\it symmetric pair} if $K$ is the fixed-point set of an involutive automorphism $\sigma: G \to G$.  For our purposes, a {\it Riemannian symmetric space} is a quotient $X = G/K$, where $(G, K)$ is a symmetric pair.  When $G$ is non-compact and semisimple with finite center, and $K$ is a maximal compact subgroup, we say that $X$ is of {\it non-compact type}.
\end{definition}

Below, the term ``symmetric space'' always means a Riemannian symmetric space as defined above.

\begin{definition} \label{def:spherical-funcs} \normalfont
Let $X = G/K$ be a Riemannian symmetric space, and write $[g] \in X$ for the image of $g \in G$ under the quotient map $G \to G/K$.  Let $\mathcal{D}(X)$ be the algebra of differential operators on $X$ that are invariant under all translations $[x] \mapsto [gx]$, $g \in G$.  A complex-valued function $\phi \in C^\infty(X)$ is called a {\it spherical function} if all of the following hold:
\begin{enumerate}
\item $\phi([\mathrm{id}]) = 1$,
\item $\phi([kx]) = \phi([x])$ for all $k \in K$,
\item $D \phi = \gamma_D \phi$ for each $D \in \mathcal{D}(X)$, where $\gamma_D$ is some complex eigenvalue.
\end{enumerate}
\end{definition}

Spherical functions play a central role in the theory of harmonic analysis on symmetric spaces, and many important families of special functions can be realized as spherical functions on some symmetric space.

\begin{example} \label{ex:cpt-gp-symspace} \normalfont
Let $G$ be a compact connected Lie group, and $K \subset G \times G$ the diagonal subgroup.  Then $(G \times G, K)$ is a symmetric pair, and we can identify $(G \times G)/K \cong G$ via $(g_1, g_2) K \mapsto g_1 g_2^{-1}$.  The spherical functions on $G$ are precisely the functions of the form $$\phi_\lambda(g) = \frac{\chi_\lambda(g)}{\dim V_\lambda},$$ where $V_\lambda$ is the irreducible representation of $G$ with highest weight $\lambda$, and $\chi_\lambda$ is its character.
\end{example}

\begin{example} \label{ex:alg-spherical-funcs} \normalfont
Let $G$ again be a compact connected Lie group.  If we regard its Lie algebra $\gog$ as an abelian Lie group, we can form the semidirect product $G \ltimes \gog$ with multiplication $(g_1, x_1) \cdot (g_2, x_2) = (g_1 g_2, \, \mathrm{Ad}_{g_1} x_2 + x_1)$.  Then $(G \ltimes \gog, G)$ is a symmetric pair, and we can identify $(G \ltimes \gog)/G \cong \gog$ via $(g, x) \mapsto \mathrm{Ad}_g x$.  Thus $\gog$ is a symmetric space, and the spherical functions on $\gog$ reduce to the Laplace transforms studied in Section \ref{sec:majorization},
$$L_{\lambda}(x) = \int_G e^{\langle \mathrm{Ad}_g \lambda, x \rangle} \, dg, \qquad x \in \gog, \quad \lambda \in \tot_\C,$$
where $\tot_\C$ is the complexification of a Cartan subalgebra $\tot \subset \gog$.
\end{example}

If $X$ is a symmetric space then its universal cover $\tilde X$ is also symmetric, and the spherical functions on $X$ may be identified with spherical functions on $\tilde X$ that are constant on the fibers of the covering map.  In this sense the spherical functions on $\tilde X$ subsume those on $X$, so that in what follows we may assume without loss of generality that $X$ is simply connected.  We say that $X$ is {\it irreducible} if it cannot be written as a nontrivial product of symmetric spaces.  A simply connected, irreducible symmetric space is always:
\renewcommand{\labelenumi}{(\arabic{enumi})}
\begin{enumerate}
\item of non-compact type; or,
\item a Euclidean space; or,
\item compact.
\end{enumerate}
\renewcommand{\labelenumi}{(\roman{enumi})}
These three types correspond respectively to the cases in which $X$ is negatively curved, flat, or positively curved.  There is a a well-known correspondence between the three types, which we now describe.  If $X^- = G/K$ is a symmetric space of non-compact type, $\sigma : G \to G$ is the associated involution fixing $K$, and $\gog = \mathrm{Lie}(G)$, then we have the Cartan decomposition $\gog = \mathfrak{k} + \mathfrak{p}$, where $\mathfrak{k} = \mathrm{Lie}(K)$ is the fixed-point set of $d \sigma$.  From these data we can construct both a Euclidean symmetric space and a compact symmetric space.  First define $\gog^+ = \mathfrak{k} + i \mathfrak{p} \subset \gog_\C$, which is the Lie algebra of the compact real form $G^+$ of $G$.  The symmetric space $X^+ = G^+/K$ is obviously compact.  Next define the algebra $\gog^0$, which is equal to $\gog$ as a vector space but is endowed with a different Lie bracket $[ \cdot, \cdot ]_0$ defined by
$$[x, y]_0 = \begin{cases}0, & x, y \in \mathfrak{p}, \\ [x,y], & \textrm{otherwise.} \end{cases}$$
Then the group $G^0 = \exp(\gog^0) \cong K \ltimes \mathfrak{p}$ acts on $\mathfrak{p}$ by affine transformations, $(k, p) \cdot x = \mathrm{Ad}_k x + p$, and $X^0 = G^0 / K \cong \mathfrak{p}$ is a Euclidean symmetric space.

Thus we have constructed a triple of symmetric spaces $(X^-, X^0, X^+)$ that belong respectively to the three types listed above.  Moreover, every simply connected, irreducible symmetric space occurs in such a triple.  In the following subsections, we study spherical functions on the spaces $X^-$, $X^0$, and $X^+$.

\subsection{Symmetric spaces of non-compact type}

When $X^- = G/K$ is a symmetric space of non-compact type, the spherical functions admit a convenient integral representation, due to Harish-Chandra \cite{HC2, HC3}.  The version that we use here is proved in \cite[ch.~4, Theorem 4.3]{GGA}.  Let $G = NAK$, $\gog = \mathfrak{n} + \mathfrak{a} + \mathfrak{k}$ be the Iwasawa decompositions of $G$ and $\gog$.  For $g \in G$, let $a(g)$ be the unique element of $\mathfrak{a}$ such that $g \in N e^{a(g)} K$. The Killing form $\langle \cdot, \cdot \rangle$ on $\gog$ restricts to an inner product on $\mathfrak{a}$.  For $\alpha \in \mathfrak{a}$, define $$\gog_\alpha = \{ \ x \in \gog \ | \ [h, x] = \langle \alpha, h \rangle x \ \textrm{for all} \ h \in \mathfrak{a} \ \}.$$
The {\it restricted root system} $\Phi$ of $X^-$ consists of all nonzero $\alpha \in \mathfrak{a}$ for which $\gog_\alpha$ is nontrivial.  Fix a choice $\Phi^+$ of positive roots, and let $W$ be the Weyl group generated by reflections in the root hyperplanes.  For $\lambda, \mu \in \mathfrak{a}$, we again write $\lambda \succeq \mu$ to indicate that $\lambda$ $W$-majorizes $\mu$.  For $\alpha \in \Phi$, define $m_\alpha = \dim \gog_\alpha$, and set $\rho = \frac{1}{2} \sum_{\alpha \in \Phi^+} m_\alpha \alpha$.  Write $dk$ for the normalized Haar measure on $K$.

\begin{theorem}[Harish-Chandra] \label{thm:hc-integral-spherical}
The spherical functions on $X^-$ are exactly the functions of the form
\begin{equation} \label{eqn:hc-integral-spherical}
\phi^-_\lambda([g]) = \int_K e^{\langle i \lambda + \rho, a(kg) \rangle} dk, \qquad g \in G,
\end{equation}
as $\lambda$ ranges over $\mathfrak{a}_\C$. Moreover, two such functions $\phi^-_\lambda$ and $\phi^-_\mu$ are identical if and only if $\mu = w(\lambda)$ for some $w \in W$.
\end{theorem}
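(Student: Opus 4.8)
The plan is to follow Harish-Chandra's original strategy, organizing everything around the observation that a spherical function is precisely a normalized, $K$-invariant joint eigenfunction of the commutative algebra $\mathcal{D}(X)$, and that such an eigenfunction is determined by its eigenvalue character $\chi : \mathcal{D}(X) \to \C$, $D \mapsto \gamma_D$. The first step is to set up the Harish-Chandra homomorphism, which identifies $\mathcal{D}(X)$ with the algebra of $W$-invariant polynomials on $\mathfrak{a}_\C$. I would obtain this by computing the radial part of each $D \in \mathcal{D}(X)$ in Iwasawa coordinates and extracting its constant-coefficient leading term; the shift by $\rho$ enters here precisely so that the image lands in the $W$-invariants. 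As a consequence, the admissible eigenvalue characters are exactly the evaluation homomorphisms $\chi_\lambda$ at points $\lambda \in \mathfrak{a}_\C$, with $\chi_\lambda = \chi_\mu$ if and only if $\mu \in W\lambda$.

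The second step is to exhibit, for each $\lambda$, a genuine spherical function realizing the character $\chi_\lambda$. I would verify directly that the Iwasawa exponential $e_\lambda(g) = e^{\langle i\lambda + \rho,\, a(g)\rangle}$ is a joint eigenfunction of $\mathcal{D}(X)$ with character $\chi_\lambda$; this is immediate from the radial-part computation of the first step, since $a(g)$ records exactly the $\mathfrak{a}$-coordinate on which the leading symbol acts. Because $a(gk) = a(g)$, the function $e_\lambda$ is already right-$K$-invariant, so the left average $\phi^-_\lambda([g]) = \int_K e_\lambda(kg)\,dk$ is $K$-bi-invariant and satisfies $\phi^-_\lambda([\mathrm{id}]) = 1$. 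It remains a $\chi_\lambda$-eigenfunction because every $D \in \mathcal{D}(X)$ commutes with left translations, hence with the averaging over $K$. Thus each $\phi^-_\lambda$ is spherical.

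The third and hardest step is the converse: every spherical function has this form, and the parametrization is exactly by $\mathfrak{a}_\C/W$. Given a spherical $\phi$ with character $\chi = \chi_\lambda$, both $\phi$ and $\phi^-_\lambda$ are normalized $K$-invariant solutions of the same system $\{\,Df = \chi_\lambda(D)f : D \in \mathcal{D}(X)\,\}$, so the statement reduces to a uniqueness claim: for a fixed character there is a one-dimensional space of $K$-invariant joint eigenfunctions that are regular at the origin. I expect this uniqueness to be the main obstacle. I would prove it by passing to the radial part on the positive chamber, where the system becomes a hypergeometric system of Heckman--Opdam type with regular singularities along the walls; analyzing the indicial equation shows that the regular $K$-invariant solution is pinned down up to the single normalizing constant, and for non-regular $\lambda$ the conclusion follows by holomorphic continuation in $\lambda$. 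The same radial analysis produces the Harish-Chandra asymptotic expansion, whose leading exponents are the $w(i\lambda) - \rho$ for $w \in W$; from this one reads off both that $\phi^-_\lambda = \phi^-_{w\lambda}$ (a functional equation that is a \emph{consequence} of uniqueness, not an evident symmetry of the integral) and that $\phi^-_\lambda = \phi^-_\mu$ forces $\chi_\lambda = \chi_\mu$, i.e.\ $\mu \in W\lambda$. Combining the three steps yields the asserted bijection between spherical functions and $W$-orbits. One may instead route the uniqueness through Godement's characterization of spherical functions as those $\phi$ for which $f \mapsto \int_G f\,\phi$ is multiplicative on the convolution algebra of $K$-bi-invariant functions, but this rests on the same radial-part analysis; the complete argument is carried out in \cite[ch.~4]{GGA}.
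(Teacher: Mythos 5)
The first thing to note is that the paper contains no proof of this statement: it is Harish-Chandra's classical theorem, quoted with an explicit citation to \cite[ch.~4, Theorem~4.3]{GGA}, so there is no internal argument to compare against. Judged on its own merits, your sketch follows the standard strategy correctly: the Harish-Chandra homomorphism identifying $\mathcal{D}(X)$ with the $W$-invariant polynomials on $\mathfrak{a}_\C$ (so that, by Chevalley's theorem, the admissible eigenvalue characters are the evaluations $\chi_\lambda$, with $\chi_\lambda = \chi_\mu$ exactly when $\mu \in W\lambda$); the verification that $e^{\langle i\lambda + \rho,\, a(g)\rangle}$ is a joint eigenfunction, right-$K$-invariant since $a(gk) = a(g)$, whose left $K$-average is therefore spherical; and the reduction of both surjectivity and the functional equation $\phi^-_\lambda = \phi^-_{w(\lambda)}$ to a uniqueness claim for normalized $K$-invariant joint eigenfunctions. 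Your observation that the $W$-invariance is a \emph{consequence} of this uniqueness, rather than a visible symmetry of the integral, is exactly right and is how the literature handles it.

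One step in your outline is softer than the rest. For the uniqueness you propose a radial-part analysis with indicial equations along the walls, disposing of non-regular $\lambda$ ``by holomorphic continuation in $\lambda$.'' As stated this does not quite parse: an abstractly given spherical function with a non-regular eigencharacter does not come embedded in a holomorphic family, so there is nothing to continue; what one can continue is a dimension bound for the smooth solutions of the (holonomic, rank $|W|$) radial system, and making that rigorous at non-generic parameters is genuinely delicate --- it is the technical heart of Harish-Chandra's analysis and of the Heckman--Opdam theory you invoke. The cleaner closings, both available in \cite{GGA}, avoid the radial system altogether: either the analyticity argument (spherical functions are real-analytic by ellipticity of the Laplacian, and since the symbols of $\mathcal{D}(X)$ exhaust the $K$-invariant polynomials, the eigenvalue character determines all Taylor data of a $K$-bi-invariant eigenfunction at the identity, hence the function), or the Godement functional-equation characterization that you mention only in passing at the end. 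With either substitution for your third step, your outline matches the proof in the cited source.
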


The following theorem is the main result of this paper.

\begin{theorem} \label{thm:spherical-majorization}
Let $X^- = G/K$ be a Riemannian symmetric space of non-compact type. For any $\lambda, \mu \in \mathfrak{a}$, the following are equivalent:
\begin{enumerate}
\item $\lambda \succeq \mu$,
\item $\phi^-_{i \lambda}(x) \ge \phi^-_{i \mu}(x)$ for all $x \in X$.
\end{enumerate}
\end{theorem}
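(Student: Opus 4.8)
The plan is to follow the same two-part strategy as in the proof of Theorem \ref{thm:H-W-convexity}, replacing the Harish-Chandra exact formula \eqref{eqn:hc} by the integral representation \eqref{eqn:hc-integral-spherical} together with its asymptotic expansion. Throughout I substitute $\lambda \mapsto i\lambda$ in \eqref{eqn:hc-integral-spherical}, so that the object of interest is the real, strictly positive function $\phi^-_{i\lambda}([g]) = \int_K e^{\langle \rho - \lambda,\, a(kg) \rangle}\, dk$ of $\lambda \in \mathfrak{a}$. Using the Cartan ($KAK$) decomposition together with the left-$K$-invariance of spherical functions, it suffices to test condition (ii) at points of the form $x = [\exp H]$ with $H$ in the closed positive chamber.

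For (i) $\Rightarrow$ (ii) I would argue exactly as at the end of the proof of Theorem \ref{thm:H-W-convexity}: for fixed $x$, the function $\lambda \mapsto \phi^-_{i\lambda}(x)$ is $W$-invariant by the uniqueness clause of Theorem \ref{thm:hc-integral-spherical} (since $i(w\lambda) = w(i\lambda)$), and it is convex in $\lambda$ because the integrand $e^{\langle \rho, a(kg)\rangle} e^{-\langle \lambda, a(kg)\rangle}$ is, for each fixed $k$, a positive constant times the exponential of a linear functional of $\lambda$; midpoint convexity then follows from the arithmetic-geometric mean inequality precisely as in the displayed computation for $L_\lambda$. A $W$-invariant convex function is $W$-convex by \cite[Theorem 1]{GW}, which is the assertion of (ii).

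The substance is the contrapositive of (ii) $\Rightarrow$ (i). Assume $\lambda \not\succeq \mu$; by $W$-invariance I may take $\lambda, \mu$ dominant, so that $\mu$ lies strictly outside $\mathrm{conv}(W\lambda)$. The key analytic input is Harish-Chandra's asymptotic expansion of spherical functions: for $H$ in the open positive chamber and $t \to +\infty$, one has $\phi^-_{i\lambda}(\exp tH) = \sum_{w \in W} c(iw\lambda)\, e^{t\langle -w\lambda - \rho,\, H \rangle} + (\textrm{lower-order terms})$, where $c$ is the Harish-Chandra $c$-function. For dominant $\lambda$ the largest exponent is attained at the longest element $w_0$, giving the exponential growth rate $\lim_{t \to \infty} \tfrac{1}{t} \log \phi^-_{i\lambda}(\exp tH) = \langle -w_0\lambda - \rho,\, H \rangle$, with $-w_0\lambda$ again dominant. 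Since the opposition involution $-w_0$ preserves both dominance and the $W$-majorization order, $-w_0\lambda \not\succeq -w_0\mu$; hyperplane separation then yields a dominant regular $H$ (obtained, as in the proof of Theorem \ref{thm:H-W-convexity}, as the dominant representative of a separating direction) with $\langle -w_0\mu, H \rangle > \langle -w_0\lambda, H \rangle$. Thus $\phi^-_{i\mu}(\exp tH)$ has strictly larger exponential rate than $\phi^-_{i\lambda}(\exp tH)$, so $\phi^-_{i\mu}(x) > \phi^-_{i\lambda}(x)$ at $x = [\exp tH]$ for all large $t$, and (ii) fails.

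I expect the main obstacle to be the rigorous extraction of this growth rate. One must ensure the leading coefficient $c(iw_0\lambda)$ does not vanish, and---more importantly---handle the case where $\lambda$ or $\mu$ lies on a wall, where the expansion degenerates and the $c$-function develops poles. As in the passage from \eqref{eqn:first-H-diff} to \eqref{eqn:singular-H-asymp}, I anticipate that coalescing terms contribute only polynomial-in-$t$ prefactors, which are harmless because the comparison is governed entirely by the strictly separated exponential rates; alternatively one can note that the rate $\langle -w_0\lambda - \rho, H \rangle$ depends continuously on $\lambda$ and that the condition $\lambda \not\succeq \mu$ is stable under small perturbations into the open chamber. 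Making this degeneration rigorous, rather than the comparison itself, is the delicate point.
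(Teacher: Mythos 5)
Your proposal is correct and follows essentially the same route as the paper: the (i) $\Rightarrow$ (ii) direction is the paper's argument verbatim ($W$-invariance via Theorem \ref{thm:hc-integral-spherical}, midpoint convexity via AM--GM, then \cite[Theorem 1]{GW}), and the contrapositive of (ii) $\Rightarrow$ (i) is likewise an exponential-rate comparison along a separating dominant ray, with the paper sidestepping your $-w_0$ bookkeeping by simply replacing $(\lambda,\mu)$ with $(-\lambda,-\mu)$ and working with $\phi^-_{-i\lambda}$. The delicate point you flag --- degeneration of the Harish-Chandra $c$-function expansion on walls --- is resolved in the paper exactly as your first remedy anticipates: it cites the sharp two-sided estimate (\ref{eqn:spherical-asymp}), namely $\phi^-_{-i\lambda}([e^{ty}]) \asymp e^{t \langle \lambda - \rho,\, y \rangle} \prod_{\alpha \in \Phi^+_\lambda} (1 + 2t \langle \alpha, y \rangle)$, valid for all dominant $\lambda$ including singular ones \cite{Duis-asymp, SchapiraHGF, NPP}, in which the coalescing terms contribute precisely the polynomial prefactor you predicted.
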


\begin{proof}
The argument generalizes the proof of Theorem \ref{thm:H-W-convexity}.  We first show that (ii) implies (i) by proving the contrapositive, and then that (i) implies (ii) using the integral representation (\ref{eqn:hc-integral-spherical}) for the spherical functions.

Suppose $\lambda \not \succeq \mu$.  Since the map $\lambda \mapsto -\lambda$ is an isometry of $\mathfrak{a}$, we have $\lambda \succeq \mu$ if and only if $- \lambda \succeq - \mu$.  Accordingly, to prove that (ii) implies (i), it suffices to show that $\phi^-_{- i \mu}(x) > \phi^-_{- i \lambda}(x)$ for some $x \in X$.  By hyperplane separation, we can obtain $y \in \mathfrak{a}$ and $C_1 > 0$ such that $\langle \mu, y \rangle > C_1$ and $\langle w(\lambda), y \rangle < C_1$ for all $w \in W$.  Clearly both of these inequalities still hold if we replace $y$ with the dominant representative of its Weyl orbit, and by Theorem \ref{thm:hc-integral-spherical} we have $\phi^-_{i \lambda} = \phi^-_{i w(\lambda)}$ for $w \in W$.  Therefore without loss of generality we may take all three of $\lambda$, $\mu$ and $y$ to be dominant.

With these assumptions, we will study the asymptotic behavior of the spherical functions $\phi^-_{- i \lambda}$ and $\phi^-_{- i \mu}$ at infinity.  This topic is well understood; see e.g.~\cite{Duis-asymp}.  In particular we have the following sharp estimate as $t \to +\infty$, which also follows directly from (\ref{eqn:symspace-HGF}) and (\ref{eqn:HGF-asymp}) below:
\begin{equation} \label{eqn:spherical-asymp}
\phi^-_{- i \lambda}([e^{ty}]) \ \asymp \ e^{t \langle \lambda - \rho, \, y \rangle} \prod_{\alpha \in \Phi^+_\lambda} (1 + 2 t \langle \alpha, y \rangle ),
\end{equation}
where
\begin{equation} \label{eqn:phi-plus-lambda}
\Phi^+_\lambda = \Big \{ \ \alpha \in \Phi^+ \ \Big | \ \frac{1}{2} \alpha \not \in \Phi^+, \ \langle \alpha, \lambda \rangle = 0  \ \Big \}.
\end{equation}
Here the notation $f(t) \asymp g(t)$ means that there exist constants $c, C, T > 0$ such that $c g(t) < f(t) < Cg(t)$ for all $t > T$.  Thus the estimate (\ref{eqn:spherical-asymp}) implies that
$$\phi^-_{- i \mu}([e^{ty}]) - \phi^-_{- i \lambda}([e^{ty}]) \ > \ e^{-t \langle \rho, y \rangle} \bigg(C_2 \, e^{t \langle \mu, y \rangle} - C_3 \, e^{tC_1} \prod_{\alpha \in \Phi^+_\lambda} (1 + 2 t \langle \alpha, y \rangle ) \bigg)$$
for large $t$ and some constants $C_2, C_3 > 0$.  For $t$ sufficiently large, the quantity on the right-hand side above is positive, proving that $\phi^-_{- i \mu}(x) > \phi^-_{- i \lambda}(x)$ for some $x \in X$, as desired.

We next prove that (i) implies (ii). It suffices to show that the function $f_x(\lambda) = \phi^-_{i \lambda}(x)$ is $W$-convex.  As in the proof of Theorem \ref{thm:H-W-convexity}, we use the result of \cite[Theorem 1]{GW}, which states that a $W$-invariant, convex function is $W$-convex.  By Theorem \ref{thm:hc-integral-spherical}, $f_x$ is $W$-invariant, so we need only prove that $f_x$ is convex, for which it suffices to check midpoint convexity.  Write $x = [g]$ for some $g \in G$.  Using the integral representation (\ref{eqn:hc-integral-spherical}) and the inequality of arithmetic and geometric means, we find:
\begin{align*}
f_{x} \bigg(\frac{1}{2}(\lambda + \mu) \bigg) &= \int_K e^{\langle \rho - (\lambda + \mu)/2, \, a(kg) \rangle} dk \\
&=  \int_K e^{\langle \rho, a(kg) \rangle} \sqrt{ e^{- \langle \lambda, a(kg) \rangle} e^{- \langle \mu, a(kg) \rangle} } \, dk \\
& \ge \frac{1}{2} \int_K e^{\langle \rho, a(kg) \rangle} ( e^{- \langle \lambda, a(kg) \rangle} + e^{- \langle \mu, a(kg) \rangle} ) \, dk \\
&= \frac{1}{2} \big( f_x(\lambda) + f_x(\mu) \big),
\end{align*}
which shows that $f_x$ is convex, completing the proof.
\end{proof}

\begin{remark} \label{rem:bounded-phi} \normalfont
There are parallels between Theorem \ref{thm:spherical-majorization} and a celebrated result of Helgason and Johnson \cite{HJ}, which classifies the bounded spherical functions on $X^-$.  Write $\mathrm{Re} \lambda,\, \mathrm{Im} \lambda \in \mathfrak{a}$ for the real and imaginary parts of $\lambda \in \mathfrak{a}_\C$.  Although Helgason and Johnson do not use the terminology of $W$-majorization, their theorem states that $\phi^-_\lambda$ is bounded if and only if $\rho \succeq \mathrm{Im} \lambda.$  More recently, this result has been generalized to a classification of bounded Heckman--Opdam hypergeometric functions \cite{NPP}.

Theorem \ref{thm:spherical-majorization} and the Helgason--Johnson theorem are independent but related.  The boundedness of $\phi^-_\lambda$ when $\rho \succeq \mathrm{Im} \lambda$ follows from Theorem \ref{thm:spherical-majorization} once one observes from the integral formula (\ref{eqn:hc-integral-spherical}) that $|\phi^-_\lambda| \le \phi^-_{i \, \mathrm{Im} \lambda}$ and that $\phi^-_{i \rho} \equiv 1.$  On the other hand, Theorem \ref{thm:spherical-majorization} by itself does not imply that all other $\phi^-_\lambda$ are unbounded (though this can be deduced from the estimate (\ref{eqn:spherical-asymp})), while the Helgason--Johnson theorem does not tell us that $W$-majorization actually induces a partial order on {\it all} $\phi^-_\lambda$ with $\lambda \in i \mathfrak{a}$.
\end{remark}

\subsection{Euclidean symmetric spaces}

The spherical functions on the Euclidean symmetric space $X^0 \cong \mathfrak{p}$ are precisely the functions
\begin{equation} \label{eqn:spherical-typeI}
\phi^0_\lambda(x) = \lim_{\varepsilon \to 0} \phi^-_{\lambda / \varepsilon}([e^{\varepsilon x}]) = \int_K e^{i \langle \lambda, \mathrm{Ad}_k x \rangle} dk, \qquad x \in \mathfrak{p},
\end{equation}
as $\lambda$ ranges over $\mathfrak{a}_\C$; see \cite[ch.~4, Proposition 4.8]{GGA}.  Taking the limit (\ref{eqn:spherical-typeI}) in the proof of Theorem \ref{thm:spherical-majorization}, we obtain the following.

\begin{proposition} \label{prop:euclidean-majorization}
Let $X^0$ be a Euclidean symmetric space.  For any $\lambda, \mu \in \mathfrak{a}$, the following are equivalent:
\begin{enumerate}
\item $\lambda \succeq \mu$,
\item $\phi^0_{i \lambda}(x) \ge \phi^0_{i \mu}(x)$ for all $x \in X$.
\end{enumerate}
\end{proposition}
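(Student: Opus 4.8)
The plan is to mirror the proof of Theorem \ref{thm:spherical-majorization}, exploiting the fact that the Euclidean spherical functions arise as the contraction limit (\ref{eqn:spherical-typeI}) of those on $X^-$. Throughout I will use the integral representation $\phi^0_{i\lambda}(x) = \int_K e^{-\langle \lambda, \mathrm{Ad}_k x \rangle}\,dk$, obtained by substituting $i\lambda$ into (\ref{eqn:spherical-typeI}), and I will treat the two implications separately.

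For (i) $\Rightarrow$ (ii) the quickest route is to pass to the limit directly in Theorem \ref{thm:spherical-majorization}. Since $W$-majorization is scale-invariant, i.e.\ $\mu \in \mathrm{conv}(W\lambda)$ if and only if $\mu/\varepsilon \in \mathrm{conv}(W(\lambda/\varepsilon))$, the hypothesis $\lambda \succeq \mu$ gives $\lambda/\varepsilon \succeq \mu/\varepsilon$ for every $\varepsilon > 0$, whence Theorem \ref{thm:spherical-majorization} yields $\phi^-_{i\lambda/\varepsilon}([e^{\varepsilon x}]) \ge \phi^-_{i\mu/\varepsilon}([e^{\varepsilon x}])$; letting $\varepsilon \to 0$ and invoking (\ref{eqn:spherical-typeI}) gives $\phi^0_{i\lambda}(x) \ge \phi^0_{i\mu}(x)$ (no uniformity is needed, since this is just a pointwise inequality between two convergent quantities). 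Alternatively, one can argue intrinsically as in the earlier proofs: $f_x(\lambda) = \phi^0_{i\lambda}(x)$ is $W$-invariant, because $\phi^0_{w\lambda} = \phi^0_\lambda$ (realize $W$ through $\mathrm{Ad}_K$ and use invariance of $dk$ and $\langle \cdot, \cdot \rangle$), and it is midpoint convex in $\lambda$ by the inequality of arithmetic and geometric means applied under the integral sign, so \cite[Theorem 1]{GW} shows $f_x$ is $W$-convex.

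For (ii) $\Rightarrow$ (i) I would prove the contrapositive by a Laplace-type asymptotic analysis, which is the flat analogue of the estimate (\ref{eqn:spherical-asymp}). Assume $\lambda \not\succeq \mu$ and use hyperplane separation to obtain a dominant $y \in \mathfrak{a}$ and $C_1 > 0$ with $\langle \mu, y \rangle > C_1 > \langle w(\lambda), y \rangle$ for all $w \in W$. Evaluating along the ray $x = -ty$ as $t \to +\infty$, I would bound the two spherical functions by their leading exponential rates: on the one hand $\phi^0_{i\lambda}(-ty) = \int_K e^{t \langle \lambda, \mathrm{Ad}_k y \rangle}\,dk \le e^{t\, \max_k \langle \lambda, \mathrm{Ad}_k y \rangle} < e^{tC_1}$, where the key input is Kostant's convexity theorem, which identifies $\max_{k \in K} \langle \lambda, \mathrm{Ad}_k y \rangle$ with the $W$-orbit maximum $\max_w \langle w(\lambda), y \rangle < C_1$; on the other hand, since $\langle \mu, y \rangle > C_1$, continuity of the integrand near $k = \mathrm{id}$ furnishes a neighborhood $U \ni \mathrm{id}$ of positive measure and a constant $C_1' \in (C_1, \langle \mu, y \rangle)$ with $\phi^0_{i\mu}(-ty) \ge \mathrm{vol}(U)\, e^{tC_1'}$. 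As $C_1' > C_1$, the $\mu$-term dominates for large $t$, so $\phi^0_{i\mu}(-ty) > \phi^0_{i\lambda}(-ty)$ and (ii) fails.

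The main obstacle is this asymptotic step. In the non-compact proof it rested on the sharp estimate (\ref{eqn:spherical-asymp}), whose flat limit carries a polynomial prefactor arising from the degenerate stationary set of $k \mapsto \langle \lambda, \mathrm{Ad}_k y \rangle$. The crossing-rate argument above is designed precisely to sidestep this delicate prefactor: I only compare leading exponential rates, using Kostant's theorem to pin down $\max_k \langle \lambda, \mathrm{Ad}_k y \rangle$. The one point requiring care is the borderline case in which $\langle \lambda, y \rangle$ is close to $C_1$, which is exactly why the lower bound for the $\mu$-term is taken at the strictly larger exponent $C_1'$, guaranteeing a genuine exponential gap.
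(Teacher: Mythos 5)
Your proposal is correct, but only your (i) $\Rightarrow$ (ii) direction coincides with the paper's argument: the paper proves this proposition in a single line, by taking the contraction limit (\ref{eqn:spherical-typeI}) throughout the proof of Theorem \ref{thm:spherical-majorization} --- which is exactly your first route, and your direct AM--GM plus \cite[Theorem 1]{GW} argument is just the limiting form of the paper's convexity step. In the (ii) $\Rightarrow$ (i) direction you genuinely diverge. The paper's degenerated argument still rests on a sharp asymptotic estimate, namely the flat analogue of (\ref{eqn:spherical-asymp}) (equivalently the rational limit of (\ref{eqn:HGF-asymp}) for generalized Bessel functions), polynomial prefactor included, and wins because the exponential gap $e^{t \langle \mu, y \rangle}$ versus $e^{t C_1}$ dominates any polynomial. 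You avoid asymptotic machinery altogether: the upper bound $\phi^0_{i\lambda}(-ty) = \int_K e^{t \langle \lambda, \mathrm{Ad}_k y \rangle}\, dk \le e^{t \max_k \langle \lambda, \mathrm{Ad}_k y \rangle}$ is trivial because $dk$ is a probability measure, and Kostant's linear convexity theorem for the $\mathrm{Ad}_K$-action on $\mathfrak{p}$ --- which is precisely the setting of $X^0$, since $\langle \lambda, \mathrm{Ad}_k y \rangle = \langle \lambda, \Pi_{\mathfrak{a}}(\mathrm{Ad}_k y) \rangle$ for $\lambda \in \mathfrak{a}$ --- identifies the exponential rate with $\max_{w \in W} \langle w(\lambda), y \rangle < C_1$, while the lower bound $\phi^0_{i\mu}(-ty) \ge \mathrm{vol}(U)\, e^{t C_1'}$ with $C_1 < C_1' < \langle \mu, y \rangle$ is a bare continuity estimate. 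This is sound (including your evaluation at $x = -ty$, which substitutes for the sign flip $\lambda \mapsto -\lambda$ used in the noncompact proof), and it buys a self-contained proof independent of the Duistermaat and Schapira--Narayanan--Pasquale--Pusti estimates; indeed your argument does not even need $\lambda$, $\mu$, $y$ dominant. What it costs is generality: the exact rate identification via Kostant is special to the flat case, where the spherical function is a plain orbital Laplace transform; for $X^-$ the Iwasawa projection $a(kg)$ and the $\rho$-shift in (\ref{eqn:hc-integral-spherical}) make the analogous rate comparison delicate, which is why the paper routes all three curvature types through the sharp asymptotics and then degenerates.
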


In particular, Theorem \ref{thm:H-W-convexity} is a special case of Proposition \ref{prop:euclidean-majorization}, corresponding to the Euclidean symmetric space described in Example \ref{ex:alg-spherical-funcs}.

\subsection{Compact symmetric spaces} \label{sec:cpt-sym}

We now consider the compact symmetric space $X^+ = G^+/K$.  Let $V_\lambda$ be the irreducible $G^+$-representation with highest weight $\lambda$, and $\chi_\lambda$ its character.  If $V_\lambda$ contains a nontrivial $K$-fixed vector, we say that $V_{\lambda}$ is a {\it spherical} representation and $\lambda$ is a spherical highest weight.  By \cite[ch.~4, Theorem 4.2]{GGA}, the spherical functions on $X^+$ are precisely the functions
\begin{equation} \label{eqn:cpt-spherical}
\phi^+_\lambda([g]) = \int_K \chi_\lambda(g^{-1}k) \, dk, \qquad g \in G^+,
\end{equation}
where $\chi_\lambda$ is the character of an irreducible spherical representation of $G^+$.

Here we depart in two ways from the conventions used above in Section \ref{sec:majorization}.  First, we now use the notation $\langle \cdot, \cdot \rangle$ to indicate the Killing form, which restricts to a {\it negative}-definite form on $\gog^+$ rather than an inner product.  Second, we now regard the roots and weights of $G^+$ as {\it imaginary}-valued linear functionals on a Cartan subalgebra $\tot \subset \gog^+$ with $i \mathfrak{a} \subset \tot$.  We then use the Killing form to identify the weights and roots with elements of $i \tot$.

With these conventions, the spherical highest weights of $G^+$ correspond to certain lattice points in $\mathfrak{a} \subset i \tot$; see \cite[ch.~5 \textsection 4]{GGA}.  By \cite[ch.~5, Theorem 4.1 and Corollary 4.2]{GGA}, if $G^+$ is simply connected and semisimple then the spherical highest weights are exactly those $\lambda \in \mathfrak{a}$ satisfying
\begin{equation} \label{eqn:lambda-reqs}
\frac{\langle \lambda, \alpha \rangle}{\langle \alpha, \alpha \rangle} \in \Z_{\ge 0} \quad \textrm{for all } \alpha \in \Phi^+,
\end{equation}
where $\Phi^+$ are the positive restricted roots of $X^-$.  Here as well, given $\lambda, \mu \in \mathfrak{a}$, we write $\lambda \succeq \mu$ to indicate that $\lambda$ $W$-majorizes $\mu$, where $W$ is the Weyl group of the {\it restricted} root system of $X^-$ (rather than the root system of $G^+$).

The function $\phi^+_\lambda$ can be analytically continued to the complexification $G_\C$, so that we may evaluate $\phi^+_\lambda([e^x])$ for any $x \in \tot_\C$.  We then have the following majorization inequality.

\begin{proposition} \label{prop:compact-majorization}  Let $\lambda, \mu \in \mathfrak{a}$ be two spherical highest weights of $G^+$. The following are equivalent:
\begin{enumerate}
\item $\lambda \succeq \mu$,
\item $\phi^+_\lambda([e^{ix}]) \ge \phi^+_\mu([e^{ix}])$ for all $x \in \tot$.
\end{enumerate}
\end{proposition}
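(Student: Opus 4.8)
The plan is to deduce this compact-type statement from the non-compact result, Theorem \ref{thm:spherical-majorization}, via the standard duality between $X^+ = G^+/K$ and $X^- = G/K$ (see \cite{GGA}). Concretely, I would use the analytic continuation of $\phi^+_\lambda$ to $G_\C$ together with the fact that $\phi^+_\lambda$ and $\phi^-$ are radial eigenfunctions of the same invariant operators with the same restricted multiplicities $m_\alpha$: along the flat they are governed by a single Heckman--Opdam-type system, the compact spherical highest weight $\lambda$ corresponds to the Harish-Chandra parameter $\lambda + \rho$, and passing between the two real forms replaces a point of $\exp(i\mathfrak{a}) \subset X^+$ by the dual point of $\exp(\mathfrak{a}) \subset X^-$. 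In the group case this is made explicit by a Kirillov-type formula writing $\phi^+_\lambda$ as a $\lambda$-independent positive multiple of the orbital integral of Section \ref{sec:majorization} through $\lambda + \rho$, which ties the argument directly to Theorem \ref{thm:H-W-convexity}. The one genuinely new ingredient at the outset is an observation that neutralizes the $\rho$-shift: since $\lambda$ and $\mu$ are dominant, $W$-majorization coincides with the (translation-invariant) height order recalled in Section \ref{sec:majorization}, so $\lambda \succeq \mu$ if and only if $\lambda + \rho \succeq \mu + \rho$, and working with the shifted parameter costs nothing.

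For (ii) $\Rightarrow$ (i) I would argue by contraposition exactly as in Theorem \ref{thm:spherical-majorization}, and it suffices to test the inequality on the subfamily $x \in i\mathfrak{a} \subset \tot$, for which $e^{ix}$ runs off to infinity along the non-compact flat $\exp(\mathfrak{a})$. The key input is the leading asymptotic of $\phi^+_\lambda$ in this regime, in which the $\rho$-shift is invisible: the Weyl numerator $e^{\langle \lambda + \rho, \,\cdot\,\rangle}$ and denominator $e^{\langle \rho, \,\cdot\,\rangle}$ combine to give leading exponent $\langle \lambda, \,\cdot\,\rangle$, and the same cancellation occurs in general through the Jacobian factor relating $\phi^+_\lambda$ to the object with parameter $\lambda + \rho$. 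With the leading exponent linear in the majorization variable $\lambda$, the hyperplane-separation argument of Theorem \ref{thm:spherical-majorization} produces, when $\lambda \not\succeq \mu$, a direction along which $\phi^+_\mu$ eventually dominates $\phi^+_\lambda$; the isometry $\lambda \mapsto -\lambda$ of $\mathfrak{a}$ fixes signs as before.

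For (i) $\Rightarrow$ (ii) I would work in the shifted variable $\nu = \lambda + \rho$ and set $g(\nu) = \phi^+_{\nu - \rho}([e^{ix}])$. Via the duality identity $g$ is a $\lambda$-independent positive multiple of a non-compact spherical function with Harish-Chandra parameter $\nu$; by Theorem \ref{thm:hc-integral-spherical} it is $W$-invariant in $\nu$, and by the midpoint arithmetic--geometric mean computation in the proof of Theorem \ref{thm:spherical-majorization} it is convex in $\nu$. Hence \cite[Theorem 1]{GW} shows $g$ is $W$-convex in $\nu$, so $\nu \succeq \nu'$ implies $g(\nu) \ge g(\nu')$. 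Taking $\nu = \lambda + \rho$ and $\nu' = \mu + \rho$ and invoking the translation-invariance observation, then restricting from continuous $\nu$ to the lattice of spherical highest weights, gives (ii).

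The main obstacle is making the convexity step of (i) $\Rightarrow$ (ii) valid for \emph{all} $x \in \tot$, not merely for $x \in i\mathfrak{a}$. For general $x$ the element $e^{ix}$ lies in $G_\C$ rather than in a real form, so the Iwasawa horospherical data (the analog of $a(kg)$) become complex and the integrand in the arithmetic--geometric mean step is no longer a positive real raised to a $\nu$-linear power, so neither the convexity nor the reality of $g(\nu)$ is manifest. I would resolve this by a radial reduction: using the $K$-invariance of $\phi^+_\lambda$ together with the polar decomposition of the complexified symmetric space to replace $x \in \tot$ by a representative for which the evaluation is real and the exponential-in-$\nu$ integral representation with $\nu$-independent positive measure is available, so that the arithmetic--geometric mean argument applies verbatim. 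A degeneration of $X^+$ to the Euclidean space $X^0$, in the spirit of the derivation of Proposition \ref{prop:euclidean-majorization}, offers an alternative route to the convexity in the remaining cases.
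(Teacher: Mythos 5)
Your proposal is correct and is essentially the paper's own proof: identify $\phi^+_\lambda$ with the analytic continuation of a non-compact spherical function with $\rho$-shifted parameter (Vretare \cite[\textsection 4]{LV}; cf.\ (\ref{eqn:P-to-F})), reduce $x \in \tot$ to $x \in i\mathfrak{a}$ so that $e^{ix}$ lies on the real flat $\exp(\mathfrak{a}) \subset G$, and invoke Theorem \ref{thm:spherical-majorization} --- with your observation that the shift is harmless because, on dominant vectors, $W$-majorization coincides with the translation-invariant root-cone (height) order being exactly the point the paper leaves implicit. The ``main obstacle'' you raise for (i) $\Rightarrow$ (ii) at general $x \in \tot$ is dispatched by the same reduction you already use for (ii) $\Rightarrow$ (i), since $\tot = \tot \cap \mathfrak{k} + i\mathfrak{a}$ and the $\tot \cap \mathfrak{k}$-component does not affect $[e^{ix}]$; hence neither your re-derived asymptotics nor the proposed ``radial reduction'' or Euclidean degeneration is needed, as both directions follow by citing Theorem \ref{thm:spherical-majorization} directly.
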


\begin{proof}
Consider the spherical function $\phi^-_{-i(\lambda - \rho)}$ on $X^-$, regarded as a function on the non-compact group $G$.  When $\lambda$ is a spherical highest weight, this function also admits an analytic continuation to $G_\C$, which coincides with $\phi^+_\lambda$; see \cite[\textsection 4]{LV}.  Since $\tot = \tot \cap \mathfrak{k} + i \mathfrak{a}$ and $[e^{y + x}] = [e^x] \in X^+$ for $y \in \tot \cap \mathfrak{k}$, we can take $x \in i \mathfrak{a}$, so that $e^{ix} \in G$.  The desired result is then immediate from Theorem \ref{thm:spherical-majorization}.
\end{proof}

\subsection{Application to orthogonal polynomials}
Let us explain how Proposition \ref{prop:compact-majorization} implies the results of \cite{CGS,Sra} relating majorization and Schur polynomials, which we stated above in Theorem \ref{thm:CGSS}.  Young diagrams with $N$ rows index the irreducible polynomial representations of $\U(N)$.  If $\lambda$ is a Young diagram and $V_\lambda$ is the corresponding irreducible representation, we can identify $\R^N$ with a Cartan subalgebra in $\mathfrak{u}(N)$ such that
\begin{equation} \label{eqn:schur-char}
S_\lambda(e^{iy_1}, \hdots, e^{iy_N}) = \frac{\chi_\lambda(e^y)}{\dim V_\lambda}, \qquad y = (y_1, \hdots, y_N) \in \R^N.
\end{equation}
If we then regard the group $\U(N)$ as a compact symmetric space as in Example \ref{ex:cpt-gp-symspace}, we find
\begin{equation} \label{eqn:spherical-schur}
\phi^+_\lambda([e^{iy}]) = S_\lambda(e^{y_1}, \hdots, e^{y_N}),
\end{equation}
that is, in an appropriate choice of coordinates, the spherical functions are normalized Schur polynomials.  Writing $x_i = e^{y_i}$, Proposition \ref{prop:compact-majorization} then yields Theorem \ref{thm:CGSS} under the stricter assumption that all  
$x_1, \hdots, x_N > 0$.  Since Schur polynomials are continuous, we can relax this to $x_1, \hdots, x_N \ge 0$, which proves Theorem \ref{thm:CGSS} in full.  A related characterization of \emph{weak majorization} in terms of Schur polynomials was recently obtained by Khare and Tao in \cite{KhTao}.

Schur polynomials are not unique in this regard.  Many families of orthogonal polynomials, such as the real and quaternionic zonal polynomials, can be realized as spherical functions on compact symmetric spaces.  In fact, Vretare \cite{LV} showed that {\it every} compact symmetric space gives rise to an associated family of orthogonal polynomials, which express the spherical functions via a relation similar to (\ref{eqn:spherical-schur}).  In all such cases, Proposition \ref{prop:compact-majorization} gives an inequality for the orthogonal polynomials that is analogous to Theorem \ref{thm:CGSS}.

\begin{remark} \normalfont
One can also deduce Theorem \ref{thm:CGSS} directly from Theorem \ref{thm:H-W-convexity} by using the Kirillov character formula for compact groups \cite[ch.~5]{AK} to write the Schur polynomials in terms of integrals of the form (\ref{eqn:L-def}). Since the arguments in (\ref{eqn:L-def}) are not constrained to lie in the weight lattice, in the case $G=\U(N)$ Theorem \ref{thm:H-W-convexity} in fact entails an extension of Theorem \ref{thm:CGSS} to generalized Schur polynomials with non-integer exponents, which was previously observed by Khare and Tao \cite{KhTao2}.  Corresponding statements for the characters of other compact groups are also immediate from Theorem \ref{thm:H-W-convexity}.
\end{remark}

\section{Hypergeometric functions}
\label{sec:conjecture}

The Heckman--Opdam hypergeometric functions are a family of special functions associated to root systems, which generalize the classical Gauss hypergeometric function to higher dimensions.  They are eigenfunctions of the hyperbolic quantum Calogero--Sutherland Hamiltonian and were introduced in the paper \cite{RSHF1} in order to prove the complete integrability of quantum Calogero--Sutherland models.  Many special functions of interest can be expressed via limits or specializations of Heckman--Opdam hypergeometric functions, including the spherical functions on all Riemannian symmetric spaces of non-compact type.  Also in \cite{RSHF1}, Heckman and Opdam defined the multivariable Jacobi polynomials, now known as Heckman--Opdam polynomials.  These are closely related to hypergeometric functions and generalize numerous widely studied families of orthogonal polynomials, such as Schur and Jack polynomials.

In this final section, we conjecture that Heckman--Opdam hypergeometric functions satisfy a fundamental monotonicity property with respect to $W$-majorization.  If true, this conjecture would unify and generalize all of the majorization results discussed in this paper.  We prove one of the two directions of implication that comprise the conjecture, and we show that the full conjecture holds in rank one.

Just as Heckman--Opdam hypergeometric functions generalize the spherical functions $\phi^-_{\lambda}$ on different symmetric spaces of non-compact type, the Heckman--Opdam polynomials generalize of the functions $\phi^+_\lambda$, up to some differences in normalization.  Similarly, another related class of functions, the generalized Bessel functions, interpolate between the functions $\phi^0_\lambda$ on different Euclidean symmetric spaces.  If the conjecture is true, then analogous results hold for both generalized Bessel functions and Heckman--Opdam polynomials.  In particular, we show below that the conjecture would immediately imply an analogue for Heckman--Opdam polynomials of the Schur polynomial inequality proved in \cite{CGS,Sra}.

To define the Heckman--Opdam hypergeometric functions, we first must fix some preliminary data.  Here we largely follow the conventions of Anker \cite{AnkerDunklNotes} and Heckman and Schlichtkrul \cite{HS}.  Let $V \cong \R^r$ be a Euclidean space with inner product $\langle \cdot, \cdot \rangle$, $\Phi \subset V$ a (crystallographic) root system spanning $V$, and $W$ the Weyl group acting on $V$ by reflections in the root hyperplanes.  The Heckman--Opdam hypergeometric function $F_{k,\lambda}$ depends on a point $\lambda$ in the complexification $V_\C$, as well as on a {\it multiplicity parameter}, which is a function $k: \Phi \to \C$ such that $k_{w \cdot \alpha} = k_\alpha$ for $w \in W$.  Unless stated otherwise, we assume in what follows that $\lambda \in V$ and that all $k_\alpha$ are nonnegative real numbers.

We now define $F_{k,\lambda}$ in terms of solutions to certain differential-difference equations.  Fix a choice of positive roots $\Phi^+$.  For $\alpha \in \Phi^+$ let $s_\alpha$ be the reflection through the hyperplane $\{x \in V \ | \ \langle \alpha, x \rangle = 0 \},$ and define $\rho^{(k)} = \frac{1}{2} \sum_{\alpha \in \Phi^+} k_\alpha \alpha.$

\begin{definition} \label{def:cherednik-op} \normalfont
For $y \in V$, the \emph{Cherednik operator} $D_{k,y}$ is the differential-difference operator
\begin{equation}
D_{k,y} = \partial_y + \sum_{\alpha \in \Phi^+} \langle y, \alpha \rangle k_\alpha \frac{1}{1 - e^{-\alpha}} (1 - s_\alpha) - \langle y, \rho^{(k)} \rangle.
\end{equation}
\end{definition}

The Cherednik operators were originally defined and studied in \cite{Ch1, Ch2}. For details of their properties, we refer the reader to these papers as well as to \cite[\textsection 4]{AnkerDunklNotes} and \cite[\textsection 2]{Op}.  Here we need only the following fact: when all $k_\alpha$ are nonnegative, for any $\lambda \in V_\C$ there is a unique smooth function $G_{k,\lambda}$ on $V$ satisfying the system of differential-difference equations
\begin{equation} \label{eqn:opdam-HG-eqn}
D_{k,y} G_{k,\lambda} = \langle y, \lambda \rangle G_{k,\lambda} \quad \textrm{for all } y \in V
\end{equation}
and normalized so that $G_{k,\lambda}(0) = 1$.

\begin{definition} \label{def:hgf} \normalfont
For any nonnegative real multiplicity parameter $k$ and any $\lambda \in V_\C$, the {\it Heckman--Opdam hypergeometric function} $F_{k, \lambda}$ is defined as
\begin{equation} \label{eqn:hgf-def}
F_{k, \lambda}(x) = \frac{1}{|W|} \sum_{w \in W} G_{k,\lambda}(w(x)).
\end{equation}
\end{definition}

The functions $F_{k, \lambda}$ unify and interpolate between many widely studied special functions, as illustrated in the following examples.

\begin{example} \label{ex:rank1} \normalfont
In the 1-dimensional case where $V \cong \R$, the root system $\Phi$ can be either $A_1$ or $BC_1$.  For $BC_1$ there are two Weyl orbits $\{ \pm 1 \}$, $\{ \pm 2 \} \subset \R$, and the Heckman--Opdam hypergeometric function reduces to the Gauss hypergeometric function:
\begin{equation} \label{eqn:HO-to-Gauss}
F_{k, \lambda}(x) = {}_{2}F_1 \Big( \frac{k_1}{2} + k_2 + \lambda, \, \frac{k_1}{2} + k_2 - \lambda ; \, k_1 + k_2 + \frac{1}{2} ; \, - \sinh^2 \frac{x}{2} \Big).
\end{equation}
The Heckman--Opdam hypergeometric function for $A_1$ corresponds to the special case \mbox{$k_2 = 0$}.
\end{example}

\begin{example} \label{ex:symspace-HGF} \normalfont
Suppose $\Psi$ is the restricted root system of a symmetric space $X = G/K$ of non-compact type, and $m_\alpha = \dim \mathfrak{g}_\alpha$ for each $\alpha \in \Psi$.  Take $V = \mathfrak{a}$, $\Phi = \{ 2 \alpha \ | \ \alpha \in \Psi \}$ and $k_{2\alpha} = \frac{1}{2} m_{\alpha}$.  Then
\begin{equation} \label{eqn:symspace-HGF}
\phi^-_{\lambda}([e^x]) = F_{k, i \lambda }(x), \qquad x \in \mathfrak{a}, \quad \lambda \in \mathfrak{a}_\C.
\end{equation}
See \cite[\textsection 4]{AnkerDunklNotes}.
\end{example}

\begin{example} \label{ex:gen-bessel} \normalfont
The generalized Bessel function $J_{k, \lambda}$ on $V$ can be obtained as the {\it rational limit} of $F_{k,\lambda}$:
\begin{equation}
J_{k, \lambda}(x) = \lim_{\varepsilon \to 0} F_{k, \lambda / \varepsilon}(\varepsilon x).
\end{equation}
From the previous example and the relation (\ref{eqn:spherical-typeI}), it is clear that $J_{k, \lambda}$ generalizes the spherical functions on Euclidean symmetric spaces in the same way that $F_{k, \lambda}$ generalizes the spherical functions on symmetric spaces of non-compact type.  See \cite[\textsection 3 and \textsection 4.4]{AnkerDunklNotes} for details on generalized Bessel functions and the rational limit.
\end{example}

For any multiplicity parameter $k$, we define the function
\begin{equation} \label{eqn:delta-k-def}
\delta_k(x) = \prod_{\alpha \in \Phi^+} (e^{\langle \alpha, x \rangle/2} - e^{- \langle \alpha,x \rangle/2})^{k_\alpha}.
\end{equation}

\begin{example} \label{ex:HGF-to-L} \normalfont
When $\Phi$ is reduced, we can identify $V$ with a Cartan subalgebra $\tot$ of a compact semisimple Lie algebra $\gog$ with root system $\Phi$. We write $k = \vec 1$ for the multiplicity parameter with $k_\alpha = 1$ for all $\alpha \in \Phi$. Then
\begin{equation} \label{eqn:HGF-to-L}
F_{\vec 1, \lambda}(x) = \frac{\Delta_\gog(x)}{\delta_{\vec 1}(x)} L_{\lambda}(x), \qquad x \in \tot.
\end{equation}
\end{example}

The notion of $W$-majorization is defined in this setting just as in Definition \ref{def:W-majorization}.  We conjecture the following monotonicity property for Heckman--Opdam hypergeometric functions with respect to $W$-majorization.

\begin{conjecture} \label{conj:HGF-majorization}
Let $\lambda, \mu \in V$ and let $k$ be a nonnegative real multiplicity parameter.  The following are equivalent:
\begin{enumerate}
\item $\lambda \succeq \mu$,
\item $F_{k,\lambda}(x) \ge F_{k,\mu}(x)$ for all $x \in V$.
\end{enumerate}
\end{conjecture}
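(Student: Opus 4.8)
The plan is to mirror the two-part structure of the proofs of Theorems~\ref{thm:H-W-convexity} and~\ref{thm:spherical-majorization}: establish (ii)~$\Rightarrow$~(i) by an asymptotic argument, and (i)~$\Rightarrow$~(ii) by a $W$-convexity argument. The geometric case (Example~\ref{ex:symspace-HGF}) makes this division natural, since both ingredients are available there; the issue is how much survives for an arbitrary nonnegative multiplicity $k$.

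For (ii)~$\Rightarrow$~(i) I would argue by contraposition, as before. Given $\lambda \not\succeq \mu$, hyperplane separation yields a dominant $y \in V$ and $C > 0$ with $\langle \mu, y\rangle > C$ and $\langle w(\lambda), y\rangle < C$ for all $w \in W$. One then compares $F_{k,\mu}(ty)$ and $F_{k,\lambda}(ty)$ as $t \to +\infty$ using the sharp asymptotics of $F_{k,\lambda}$ along rays into the positive chamber (the Harish--Chandra series, whose leading behavior along $ty$ is governed by a coefficient times $e^{t\langle\, \cdot\, - \rho,\, y\rangle}$, with wall corrections of the type appearing in~(\ref{eqn:spherical-asymp})). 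The only point requiring care is that the coefficient of the dominant exponential $e^{t\langle \mu, y\rangle}$ is positive: this holds because the relevant factors in the Gindikin--Karpelevich product for $c_k(\mu)$ are positive when $\mu$ is real dominant and $k \ge 0$, with positive polynomial corrections when $\mu$ lies on a wall. Since $\langle \mu, y\rangle > C > \langle \lambda, y\rangle$, the $\mu$-term dominates and forces $F_{k,\mu}(ty) > F_{k,\lambda}(ty)$ for large $t$, contradicting (ii). I expect this implication to go through for every $k \ge 0$.

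For (i)~$\Rightarrow$~(ii), the result of \cite[Theorem~1]{GW} together with the standard invariance $F_{k,w\lambda} = F_{k,\lambda}$ reduces the claim to showing that $\lambda \mapsto F_{k,\lambda}(x)$ is \emph{convex} for each fixed $x \in V$. In the geometric case this is immediate: the integral formula~(\ref{eqn:hc-integral-spherical}) exhibits $F_{k,\lambda}(x)$ as a Laplace transform $\int_V e^{\langle \lambda, \xi\rangle}\, d\mu_x(\xi)$ of a \emph{positive} measure $\mu_x$, whence convexity follows from the arithmetic--geometric mean inequality exactly as in the proofs above. The heart of the conjecture is therefore the existence of such a positive integral representation --- equivalently, the convexity of $\lambda \mapsto F_{k,\lambda}(x)$ --- for an arbitrary nonnegative $k$ that does not arise from a symmetric space. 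This is the step I expect to be the main obstacle: no positive representation of this kind is known for general multiplicities, and obtaining convexity by other means appears to require input beyond the tools used here.

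In rank one the obstacle disappears. By~(\ref{eqn:HO-to-Gauss}), $F_{k,\lambda}$ is a Gauss hypergeometric function depending on $\lambda$ only through $\lambda^2$, and the classical theory of Jacobi functions supplies a positive, even integral representation $F_{k,\lambda}(x) = \int_{-\ell(x)}^{\ell(x)} \cosh(\lambda \xi)\, d\mu_x(\xi)$ with $\mu_x \ge 0$ and $\ell(x) > 0$ for $x \neq 0$. Since $\cosh(\lambda\xi)$ is convex in $\lambda$ and strictly increasing in $|\lambda|$ for $\xi \neq 0$, this single representation yields both implications at once for $\Phi = A_1$ and $\Phi = BC_1$, proving the full conjecture in rank one. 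I would present this case in self-contained form and record the positive integral representation of $F_{k,\lambda}$ in higher rank as the precise missing ingredient.
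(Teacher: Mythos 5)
First, note that the statement is a \emph{conjecture}: the paper itself proves only the implication (ii)~$\Rightarrow$~(i) (Proposition~\ref{prop:HGF-majorization-onlyif}) and the rank-one case (Proposition~\ref{prop:HGF-rank1}), and explicitly records in Remark~\ref{rem:HO-integral-formulae} that (i)~$\Rightarrow$~(ii) is open in general because no analogue of the integral representation (\ref{eqn:hc-integral-spherical}) is known for arbitrary $k \ge 0$. Your proposal reaches exactly the same state of knowledge and correctly refrains from overclaiming: your (ii)~$\Rightarrow$~(i) argument is essentially the paper's --- the same hyperplane separation followed by the sharp asymptotics (\ref{eqn:HGF-asymp}) of Schapira and Narayanan--Pasquale--Pusti, and your remark about positivity of the Gindikin--Karpelevich factors, with polynomial corrections on walls, is precisely what that cited estimate packages (including the singular case where the naive leading $c$-function coefficient vanishes, which is the delicate point handled in \cite{NPP}). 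Likewise, your identification of the missing ingredient --- convexity of $\lambda \mapsto F_{k,\lambda}(x)$, equivalently a positive Laplace-type representation --- coincides with the paper's own diagnosis in Remark~\ref{rem:HO-integral-formulae}.

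Where you genuinely diverge is the rank-one case. The paper's proof of Proposition~\ref{prop:HGF-rank1} is elementary and self-contained: it derives the hypergeometric ODE (\ref{eqn:HGDE-rank1}) with initial conditions (\ref{eqn:rank1-ICs}) and runs a first-crossing contradiction argument comparing $F_{k,\lambda}$ and $F_{k,\mu}$ on $(0,\infty)$. You instead invoke the classical Mehler-type representation of Jacobi functions with nonnegative kernel (Flensted-Jensen--Koornwinder), writing $F_{k,\lambda}(x)$ as an integral of $\cosh(\lambda\xi)$ against a positive even measure. This is a valid alternative, and it is conceptually attractive because it realizes in rank one exactly the positive-representation strategy that the paper names as the obstacle in higher rank, giving convexity and strict monotonicity in $|\lambda|$ in one stroke. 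What it costs: you outsource the real content to a nontrivial classical positivity theorem, so you should check that the kernel-nonnegativity range ($\alpha \ge \beta \ge -\tfrac12$ in Jacobi parameters, i.e.\ $\alpha = k_1 + k_2 - \tfrac12$, $\beta = k_2 - \tfrac12$) covers all $k_1, k_2 \ge 0$ including the degenerate endpoints --- e.g.\ $k_1 = k_2 = 0$, where $F_{k,\lambda}(x) = \cosh(\lambda x)$ and the measure collapses to point masses, and the $A_1$ case $k_2 = 0$ --- and, for the strict inequality needed in (ii)~$\Rightarrow$~(i), you should note explicitly that for $x \ne 0$ the measure $\mu_x$ has positive mass away from $\xi = 0$ (otherwise $F_{k,\lambda}(x)$ would be independent of $\lambda$, contradicting the asymptotics). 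With those small points supplied, both routes are sound; the paper's is more elementary, yours is shorter and better aligned with the structure the conjecture is asking for in general.
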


Here we show one half of the conjecture, namely that (ii) implies (i), using known sharp asymptotics for $F_{k,\lambda}$.  We then give an elementary proof of the conjecture in rank one, where it amounts to an inequality for the Gauss hypergeometric function.

\begin{proposition} \label{prop:HGF-majorization-onlyif}
Take $\lambda, \mu$ and $k$ as in Conjecture \ref{conj:HGF-majorization}.  If $F_{k,\lambda}(x) \ge F_{k,\mu}(x)$ for all $x \in V$, then $\lambda \succeq \mu$.
\end{proposition}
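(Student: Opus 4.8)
The plan is to prove the contrapositive, running essentially the same argument that establishes the ``(ii) implies (i)'' direction of Theorems \ref{thm:H-W-convexity} and \ref{thm:spherical-majorization}, but now driven by the sharp asymptotics for the Heckman--Opdam hypergeometric functions. Suppose $\lambda \not\succeq \mu$. The function $F_{k,\nu}(x)$ is $W$-invariant in $x$ by its very definition (\ref{eqn:hgf-def}), and it is also $W$-invariant in the spectral parameter, i.e.\ $F_{k,w\nu} = F_{k,\nu}$ for all $w \in W$ (the analogue of the last assertion of Theorem \ref{thm:hc-integral-spherical}). Since $W$-majorization is likewise invariant under replacing $\lambda,\mu$ by Weyl-conjugate representatives, I may assume from the start that $\lambda$ and $\mu$ are both dominant.

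First I would apply the hyperplane separation theorem exactly as before. Because $\mu$ lies outside the convex hull of the $W$-orbit of $\lambda$, there exist $y \in V$ and $C_1 > 0$ with $\langle \mu, y\rangle > C_1$ and $\langle w(\lambda), y\rangle < C_1$ for every $w \in W$; replacing $y$ by its dominant Weyl representative preserves both inequalities. Taking $w = e$ already gives $\langle\lambda,y\rangle < C_1 < \langle\mu,y\rangle$ (and, since $\lambda$ and $y$ are dominant, $\langle\lambda,y\rangle = \max_{w\in W}\langle w(\lambda),y\rangle$), so the essential outcome is the strict separation of exponential rates $\langle\mu,y\rangle > \langle\lambda,y\rangle$.

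Next I would insert this into the sharp asymptotic estimate (\ref{eqn:HGF-asymp}), which gives, for dominant $\nu$ and $y$ in the positive chamber,
\[
F_{k,\nu}(ty) \ \asymp \ e^{t\langle \nu - \rho^{(k)},\, y\rangle}\prod_{\alpha\in\Phi^+_\nu}\bigl(1 + 2t\langle\alpha,y\rangle\bigr) \qquad (t\to+\infty),
\]
with $\Phi^+_\nu$ as in (\ref{eqn:phi-plus-lambda}); this is precisely the general estimate from which (\ref{eqn:spherical-asymp}) was specialized. Applying it to both $\mu$ and $\lambda$, bounding $e^{t\langle\lambda,y\rangle}<e^{tC_1}$ and using that the product over $\Phi^+_\mu$ is at least $1$, I obtain
\[
F_{k,\mu}(ty) - F_{k,\lambda}(ty) \ >\ e^{-t\langle\rho^{(k)},y\rangle}\Bigl(C_2\, e^{t\langle\mu,y\rangle} - C_3\, e^{tC_1}\textstyle\prod_{\alpha\in\Phi^+_\lambda}(1+2t\langle\alpha,y\rangle)\Bigr)
\]
for large $t$ and suitable constants $C_2,C_3>0$. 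Since $\langle\mu,y\rangle>C_1$, the exponential $e^{t\langle\mu,y\rangle}$ outgrows the polynomially corrected exponential $e^{tC_1}\prod_\alpha(1+2t\langle\alpha,y\rangle)$, so the right-hand side is positive for $t$ large. Thus $F_{k,\mu}(x)>F_{k,\lambda}(x)$ at $x=ty\in V$, contradicting (ii), and the contrapositive is proved.

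I do not expect a serious obstacle in this direction, which is exactly the ``easy half'' of Conjecture \ref{conj:HGF-majorization}: its entire content is the availability of a sufficiently sharp asymptotic for $F_{k,\lambda}$ valid for \emph{all} nonnegative real multiplicity parameters $k$, not merely the discrete values arising from symmetric spaces. The only point demanding a little care is the case in which $\lambda$ lies on one or more root hyperplanes, where the polynomial prefactor $\prod_{\alpha\in\Phi^+_\lambda}(1+2t\langle\alpha,y\rangle)$ appears; but an exponential of strictly larger rate dominates any such polynomial correction, so this changes nothing, and indeed only the exponential rate $\langle\nu,y\rangle$ enters the comparison. The genuinely hard direction---that (i) implies (ii), equivalently that $\lambda\mapsto F_{k,\lambda}(x)$ is $W$-convex---is left open, since the midpoint-convexity argument via the arithmetic--geometric mean inequality used in Theorems \ref{thm:H-W-convexity} and \ref{thm:spherical-majorization} relies on the positive integral representations available in the symmetric-space setting and has no evident counterpart for general $k$.
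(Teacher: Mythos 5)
Your proposal is correct and takes essentially the same route as the paper: prove the contrapositive via hyperplane separation (with $\lambda$, $\mu$, $y$ reduced to dominant representatives using $W$-invariance of $F_{k,\lambda}$ in both arguments) and then let the sharp asymptotics (\ref{eqn:HGF-asymp}) of Schapira and Narayanan--Pasquale--Pusti force $F_{k,\mu}(ty) > F_{k,\lambda}(ty)$ for large $t$, since the exponential rate $\langle\mu,y\rangle > C_1$ beats $e^{tC_1}$ times any polynomial prefactor. The only discrepancy is cosmetic: you carried the factor $2$ from the symmetric-space specialization (\ref{eqn:spherical-asymp}), whereas the general estimate (\ref{eqn:HGF-asymp}) has factors $(1 + t\langle\alpha,y\rangle)$; this changes nothing in the comparison.
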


\begin{proof}
Again we show the contrapositive.  Suppose $\lambda \not \succeq \mu$, and again use hyperplane separation to obtain a $y \in V$ and $C_1 > 0$ such that $\langle \mu, y \rangle > C_1$ and $\langle w(\lambda), y \rangle < C_1$ for all $w \in W$.  Without loss of generality, we take $\lambda$, $\mu$ and $y$ to be dominant.  We have the following sharp asymptotic estimate, due to Schapira \cite[Remark 3.1]{SchapiraHGF} and Narayanan--Pasquale--Pusti \cite[Theorem 3.4]{NPP}:
\begin{equation} \label{eqn:HGF-asymp}
F_{k,\lambda}(ty) \ \asymp \ e^{t \langle \lambda - \rho^{(k)}, \, y \rangle} \prod_{\alpha \in \Phi^+_\lambda} (1 + t \langle \alpha, y \rangle )
\end{equation}
as $t \to +\infty,$ with $\Phi^+_\lambda$ as defined above in (\ref{eqn:phi-plus-lambda}) and $\asymp$ as defined immediately after. We thus find that
$$F_{k, \mu}(ty) - F_{k, \lambda}(ty) \ > \ e^{-t \langle \rho^{(k)}, \, y \rangle} \bigg( C_2 \, e^{t \langle \mu, y \rangle} - C_3 \, e^{t C_1} \prod_{\alpha \in \Phi^+_\lambda} (1 + t \langle \alpha, y \rangle ) \bigg)$$
for large $t$ and some constants $C_2, C_3 > 0$.  For $t$ sufficiently large, the quantity on the right-hand side above is positive, which implies that $F_{k,\lambda}(x) < F_{k,\mu}(x)$ for some $x \in V$, completing the proof.
\end{proof}

\begin{remark} \label{rem:HO-integral-formulae} \normalfont
As in the proof of Theorem \ref{thm:spherical-majorization}, to complete the proof of Conjecture \ref{conj:HGF-majorization} it suffices to check that the function $\lambda \mapsto F_{k,\lambda}(x)$ is midpoint-convex.  However, integral representations analogous to (\ref{eqn:hc-integral-spherical}) for general Heckman--Opdam hypergeometric functions are not known, so we cannot directly apply the same technique.  Although there are known integral expressions in certain cases where the multiplicity parameter does not correspond to a symmetric space (see e.g.~\cite{RoslerVoit, Sun}), these are more complicated than (\ref{eqn:hc-integral-spherical}) and have so far resisted a similar analysis. A more promising approach to a general proof of Conjecture \ref{conj:HGF-majorization} might be to use hypergeometric differential equations, as illustrated in the following proposition.  Another approach could be to analyze a series expansion for the hypergeometric function as in \cite{NPP}.
\end{remark}

\begin{proposition} \label{prop:HGF-rank1}
When $\dim V = 1$, Conjecture \ref{conj:HGF-majorization} is true.
\end{proposition}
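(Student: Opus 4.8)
The plan is to prove the two implications separately, noting that Proposition~\ref{prop:HGF-majorization-onlyif} already gives (ii)$\Rightarrow$(i). For the converse, observe that in rank one $W = \{\pm 1\}$, so $\lambda \succeq \mu$ means exactly $|\mu| \le |\lambda|$, and the formula (\ref{eqn:HO-to-Gauss}) shows that $F_{k,\lambda}$ is an \emph{even} function of $\lambda$, since its ${}_2F_1$ parameters are interchanged by $\lambda \mapsto -\lambda$. Hence (i)$\Rightarrow$(ii) reduces to the claim that, for each fixed $x$, the map $\lambda \mapsto F_{k,\lambda}(x)$ is nondecreasing on $[0,\infty)$; indeed, evenness together with this monotonicity yields $F_{k,\lambda}(x) = F_{k,|\lambda|}(x) \ge F_{k,|\mu|}(x) = F_{k,\mu}(x)$ whenever $|\mu| \le |\lambda|$. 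To set up the analysis I would fix $x$, put $w = \sinh^2 \tfrac{x}{2} \ge 0$ and $z = -w \le 0$, and write $u(z) = {}_2F_1(a+\lambda, a-\lambda; c; z)$ with $a = \tfrac{k_1}{2} + k_2 \ge 0$ and $c = k_1 + k_2 + \tfrac12 > 0$. Then $u$ satisfies the Gauss equation $L_\lambda u = 0$, where $L_\lambda = z(1-z)\partial_z^2 + [c-(2a+1)z]\partial_z + (\lambda^2 - a^2)$.

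First I would establish that $u > 0$ on $(-\infty, 0]$, which is the one genuinely analytic input. This follows from Pfaff's transformation, which gives $u = (1+w)^{-(a+\lambda)}\, {}_2F_1\big(a+\lambda,\, c-a+\lambda;\, c;\, \tfrac{w}{1+w}\big)$. Since $a + \lambda \ge 0$, $c - a + \lambda = \tfrac{k_1+1}{2} + \lambda > 0$ and $c > 0$, the transformed ${}_2F_1$ is a power series in $\tfrac{w}{1+w} \in [0,1)$ with nonnegative coefficients and constant term $1$, hence is strictly positive; the prefactor is positive as well.

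Next, set $v = \partial_\lambda u$. Differentiating $L_\lambda u = 0$ in $\lambda$ yields $L_\lambda v = -2\lambda u$. Because $u > 0$ I may write $v = u\,h$; substituting and cancelling the term $(L_\lambda u)h = 0$ collapses this second-order equation to a first-order one for $g = h'$, namely $z(1-z)g' + \big[\, 2z(1-z)\tfrac{u'}{u} + c - (2a+1)z \,\big]g = -2\lambda$. The boundary data are $u(0) = 1$, so $h(0) = 0$ and $v(0) = 0$, together with $g(0) = v'(0) = -2\lambda/c < 0$ (using $u'(0) = (a^2-\lambda^2)/c$). The crucial observation is that at any hypothetical zero $z_1 \in (-\infty, 0)$ of $g$ the equation forces $g'(z_1) = -2\lambda/[z_1(1-z_1)] > 0$, since $z_1(1-z_1) < 0$ and $\lambda > 0$. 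A standard ``largest zero'' argument---taking $z_1 = \sup\{z < 0 : g(z) \ge 0\}$ and contradicting $g'(z_1) > 0$ against $g < 0$ on $(z_1, 0]$---then shows $g < 0$ on all of $(-\infty, 0]$. Consequently $h(z) = \int_0^z g > 0$ for $z < 0$, whence $v = u\,h > 0$; that is, $F_{k,\lambda}(x)$ is strictly increasing in $\lambda$ for $\lambda > 0$. This establishes the required monotonicity and completes the proof.

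I expect the only real obstacle to be the positivity $u > 0$, which is exactly what licenses the passage to the logarithmic-derivative coefficient $u'/u$ and hence the first-order reduction; everything afterward is a soft sign argument for a scalar first-order ODE. This is also where the difficulty balloons in higher rank: one no longer has a single hyperbolic variable to exploit, and the positivity-of-coefficients trick has no evident analogue, which is consistent with the remark above that a general differential-equation approach, while promising, has so far resisted completion.
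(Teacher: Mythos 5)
Your proof is correct, and it takes a genuinely different route from the paper's. The paper likewise invokes Proposition \ref{prop:HGF-majorization-onlyif} for (ii)$\Rightarrow$(i) and reduces to $BC_1$, but for (i)$\Rightarrow$(ii) it stays in the spatial variable: it compares the two solutions $F_{k,\lambda}$ and $F_{k,\mu}$ of \eqref{eqn:HGDE-rank1} directly, using the initial data $F(0)=1$, $F'(0)=0$ and $F''_{k,\lambda}(0)>F''_{k,\mu}(0)$ when $|\lambda|>|\mu|$, and rules out a first crossing at $x_0>0$ by producing a point $x_2\in(0,x_0)$ where $F'_{k,\lambda}=F'_{k,\mu}$, $F''_{k,\lambda}<F''_{k,\mu}$, yet $F_{k,\lambda}>F_{k,\mu}$, then subtracting the two ODEs to reach a sign contradiction. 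You instead differentiate in the spectral parameter: the variational equation $L_\lambda v=-2\lambda u$ for $v=\partial_\lambda u$, the reduction of order $v=uh$ (licensed by your positivity lemma), and the sign analysis of the first-order equation for $g=h'$ --- whose forcing term $-2\lambda$ pins down $g'(z_1)>0$ at any zero $z_1<0$ --- yield the strict monotonicity $\partial_\lambda F_{k,\lambda}(x)>0$ for $\lambda>0$, $x\neq 0$, which is a slightly stronger conclusion than the pairwise comparison. The cost of your route is the extra analytic input $u>0$ on $(-\infty,0]$, which your Pfaff argument delivers cleanly (all transformed parameters nonnegative, argument $w/(1+w)\in[0,1)$); the paper's crossing argument needs no positivity statement at all and is self-contained modulo the ODE and initial conditions. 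Your soft steps are all sound: at the largest zero one indeed has $g(z_1)=0$ by two-sided continuity, so $g'(z_1)>0$ contradicts $g<0$ on $(z_1,0]$, and $g$ is real-analytic there since $u$ and $v$ are with $u>0$. Incidentally, your computation $u'(0)=(a^2-\lambda^2)/c$ exposes a harmless slip in the paper: \eqref{eqn:HGDE-rank1} with $F'(0)=0$ actually forces $F''_{k,\lambda}(0)=\bigl(\lambda^2-(k_1/2+k_2)^2\bigr)/(2c)$ with $c=k_1+k_2+\tfrac12$, so the paper's displayed $F''_{k,\lambda}(0)$ is off by the positive factor $2c$, which does not affect the sign comparison it needs. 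Your closing diagnosis of the higher-rank obstruction is consonant with Remark \ref{rem:HO-integral-formulae}.
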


\begin{proof}
In light of Proposition \ref{prop:HGF-majorization-onlyif}, we need only show that (i) implies (ii) in Conjecture \ref{conj:HGF-majorization}.  Following the discussion in Example \ref{ex:rank1}, it is sufficient to consider the case $\Phi = BC_1$.  It is a classical result that the Gauss hypergeometric function $F(z) = {}_2 F_1(a, b; c; z)$ satisfies Euler's hypergeometric equation:
$$ z(1-z) \frac{d^2F}{dz^2} + [c - (a+b+1)z] \frac{dF}{dz} - ab \, F = 0.$$
Comparing to (\ref{eqn:HO-to-Gauss}), we find:
\begin{equation} \label{eqn:HGDE-rank1}
F''_{k,\lambda}(x) + \Big( k_1 \coth \frac{x}{2} + 2 k_2 \coth x \Big) F'_{k, \lambda}(x) + \Big[ \Big(\frac{k_1}{2} + k_2 \Big)^2 - \lambda^2 \Big] F_{k,\lambda}(x) = 0,
\end{equation}
for $\lambda, x \in \R$.  The function $F_{k,\lambda}$ is determined by (\ref{eqn:HGDE-rank1}) and by the initial conditions
\begin{equation} \label{eqn:rank1-ICs}
F'_{k,\lambda}(0) = 0, \qquad F_{k, \lambda}(0) = 1.
\end{equation}
The above initial conditions follow directly from the definition (\ref{eqn:hgf-def}) in rank one: we must have $F'_{k,\lambda}(0) = 0$ because $F_{k, \lambda}$ is $W$-invariant (i.e.~even), and we must have $F_{k, \lambda}(0) = 1$ due to the stipulation that $G_{k, \lambda}(0) = 1$.

Suppose $\lambda \succeq \mu$, which in dimension one just means that $|\lambda| \ge |\mu|$.  Since the equation (\ref{eqn:HGDE-rank1}) depends only on $\lambda^2$ and not on the sign of $\lambda$, we find $F_{k, -\lambda} = F_{k, \lambda}$, so we can in fact take $|\lambda| > |\mu|$.  We will show that $F_{k, \lambda}(x) \ge F_{k, \mu}(x)$ for all $x \in \R$, with equality only at $x = 0$.

Since $F_{k, \lambda}$ is even, it suffices to consider $x \ge 0$.  From (\ref{eqn:HGDE-rank1}) and the initial conditions (\ref{eqn:rank1-ICs}), we have:
$$F_{k, \lambda}(0) = F_{k, \mu}(0) = 1,$$
$$F'_{k,\lambda}(0) = F'_{k,\mu}(0) = 0,$$
$$F''_{k,\lambda}(0) = \lambda^2 - \Big(\frac{k_1}{2} + k_2 \Big)^2 > \mu^2 - \Big(\frac{k_1}{2} + k_2 \Big)^2 = F''_{k,\mu}(0).$$
Therefore there is some $\varepsilon > 0$ such that for all $x \in (0, \varepsilon)$,
$$F_{k, \lambda}(x) > F_{k, \mu}(x), \qquad F'_{k, \lambda}(x) > F'_{k, \mu}(x), \qquad F''_{k, \lambda}(x) > F''_{k, \mu}(x).$$
Consider the set $E = \{ x > 0 \ | \ F_{k, \lambda}(x) = F_{k, \mu}(x) \}.$  If $E$ is empty, then $F_{k, \lambda}(x) > F_{k, \mu}(x)$ for all $x > 0$, and there is nothing to prove.  Assume for the sake of contradiction that $E$ is not empty, and let $x_0 = \inf E.$  Clearly $x_0 > \varepsilon.$  Since $F_{k, \lambda}$ and $F_{k, \mu}$ are continuous, we have $F_{k, \lambda}(x_0) = F_{k, \mu}(x_0)$, and
\begin{equation} \label{eqn:F-x0}
F_{k, \lambda}(x) > F_{k, \mu}(x) \textrm{ for all } x \in (0, x_0).
\end{equation}
However, since $$F_{k, \lambda}(x_0) = 1 + \int_{0}^{x_0} F'_{k, \lambda}(\tau) \, d\tau, \qquad F_{k, \mu}(x_0) = 1 + \int_{0}^{x_0} F'_{k, \mu}(\tau) \, d\tau,$$ and $F'_{k, \lambda} > F'_{k, \mu}$ on $(0, \varepsilon)$, in order to have $F_{k, \lambda}(x_0) = F_{k, \mu}(x_0)$ there must be some $x_1 \in (\varepsilon, x_0)$ such that $F'_{k, \lambda}(x_1) < F'_{k, \mu}(x_1)$.  By the intermediate value theorem and by (\ref{eqn:F-x0}), there must then be some $x_2 \in (\varepsilon, x_1)$ such that all of the following hold:
\[
\textrm{(a) } F'_{k, \lambda}(x_2) = F'_{k, \mu}(x_2), \qquad
\textrm{(b) } F''_{k, \lambda}(x_2) < F''_{k, \mu}(x_2), \qquad
\textrm{(c) } F_{k, \lambda}(x_2) > F_{k, \mu}(x_2).
\]
Subtracting (\ref{eqn:HGDE-rank1}) from the corresponding equation for $F_{k, \mu}$ and using condition (a) above to cancel terms, we find
$$ F''_{k,\mu}(x_2) - F''_{k,\lambda}(x_2) + \Big[ \Big(\frac{k_1}{2} + k_2 \Big)^2 - \mu^2 \Big] F_{k,\mu}(x_2) - \Big[ \Big(\frac{k_1}{2} + k_2 \Big)^2 - \lambda^2 \Big] F_{k,\lambda}(x_2) = 0.$$
But by (b), (c), and the assumption that $|\lambda| > |\mu|$, the left-hand side above must be positive, yielding a contradiction and completing the proof.
\end{proof}

We next turn our attention from hypergeometric functions to the closely related family of Heckman--Opdam polynomials, which we now define.  For $\alpha \in \Phi$, write $$\alpha^\vee = \frac{2 \alpha}{\langle \alpha, \alpha \rangle}.$$  The fundamental weights $\omega_1, \hdots, \omega_r$ are defined by $\langle \omega_i, \alpha_j^\vee \rangle = \delta_{ij},$ where $\alpha_1, \hdots, \alpha_r$ are the simple roots.  They span the {\it weight lattice} $P \subset V$.  The {\it dominant integral weights} are the lattice points $P^+ \subset P$ that lie in the dominant Weyl chamber.

The Heckman--Opdam polynomials $P_{k,\lambda}$ depend on a nonnegative multiplicity parameter $k$ and a dominant integral weight $\lambda \in P^+$.  They are elements of $\R[P]$, the group algebra of the weight lattice, and are therefore polynomials in an abstract-algebraic sense.  However, it is typical to identify $\R[P]$ with the algebra spanned by the functions $e^{\langle \lambda, x \rangle}$, $\lambda \in P$, so that as functions on $V$ the Heckman--Opdam polynomials are actually {\it exponential} polynomials.

We write an element $f \in \R[P]$ as $f = \sum_{\lambda \in P} f_\lambda e^\lambda$, where only finitely many $f_\lambda$ are nonzero, and set $$\bar f = \sum_{\lambda \in P} f_{-\lambda} e^\lambda.$$ Define a bilinear form $(\cdot, \cdot)_k$ on $\R[P]$ by $$( f, g )_k = (f \bar g \delta_k \bar \delta_k)_0,$$ which extracts the constant term (i.e.~the coefficient of $e^0 = 1$) in $f \bar g \delta_k \bar \delta_k$, where $\delta_k$ is the function defined in (\ref{eqn:delta-k-def}).  This bilinear form is symmetric and positive definite, and therefore defines an inner product on $\R[P]$.

For $\lambda \in P^+$, let $$M_\lambda = \frac{|W \cdot \lambda|}{|W|} \sum_{w \in W} e^{w(\lambda)}$$ be the monomial $W$-invariant (exponential) polynomial, and define $\mathrm{low}(\lambda)$ as the set of $\mu \in P^+$ such that $\lambda - \mu$ can be written as a linear combination of simple roots with non-negative integer coefficients.

\begin{definition} \label{def:HO-polys} \normalfont
For $\lambda \in P^+$, the {\it Heckman--Opdam polynomial} $P_{k, \lambda}$ is defined by
\begin{equation} \label{eqn:HO-poly1}
P_{k, \lambda} = \sum_{\mu \in \mathrm{low}(\lambda)} c_{\lambda \mu} M_\mu, \qquad c_{\lambda \lambda} = 1,
\end{equation}
and by the orthogonality relations
\begin{equation} \label{eqn:HO-orth}
(P_{k, \lambda} , \, M_\mu)_k = 0, \qquad \mu \in \mathrm{low}(\lambda), \ \mu \not = \lambda.
\end{equation}
Note that (\ref{eqn:HO-poly1}) and (\ref{eqn:HO-orth}) together determine the coefficients $c_{\lambda \mu} \in \R$ for $\mu \not = \lambda.$
\end{definition}

As $\lambda$ ranges over $P^+$ with $k$ fixed, the $P_{k,\lambda}$ form an $\R$-basis of the algebra $\R[P]^W$ of $W$-invariant elements in $\R[P]$.  Moreover this basis is orthogonal with respect to the inner product $(\cdot, \cdot)_k$.

\begin{example} \label{ex:HO-poly-examples} \normalfont
When all $k_\alpha$ are 0, $P_{0,\lambda} = M_\lambda.$

Let $\gog$ be a compact semisimple Lie algebra with root system $\Phi$, and identify $V$ with a Cartan subalgebra $\tot \subset \gog$.  Let $G$ be the connected, simply connected Lie group with $\mathrm{Lie}(G) = \gog$.  When $k = \vec 1$, $P_{\vec 1, \lambda}(ix) = \chi_\lambda(e^x)$ is the character of the irreducible $G$-representation with highest weight $\lambda$.

When $\Phi = A_{N-1}$, there is a natural injective homomorphism from $\R[P]^W$ to the usual ring of symmetric polynomials in $N$ variables.  This homomorphism is obtained by identifying $M_\lambda$ with the monomial symmetric polynomial $$m_\lambda(x_1, \hdots, x_N) = \frac{|S_N \cdot \lambda |}{N!} \sum_{\sigma \in S_N} \prod_{i=1}^N x_i^{\lambda_{\sigma(i)}}.$$ Under this identification, the Heckman--Opdam polynomials are Jack polynomials.  In particular, for $k = \vec 1$, they are Schur polynomials.  In light of Proposition \ref{prop:HO-majorization} below, Conjecture \ref{conj:HGF-majorization} therefore subsumes both 
the Schur polynomial inequality of \cite{CGS,Sra} and the classical Muirhead inequality for monomial symmetric polynomials \cite{Muirhead}.
\end{example}

Up to a normalizing factor, the Heckman--Opdam polynomials turn out to be specializations of the Heckman--Opdam hypergeometric function.  In particular, for $\lambda \in V$, define
\begin{equation} \label{eqn:tilde-c-def}
\tilde c(\lambda, k) = \prod_{\alpha \in \Phi^+} \frac{\Gamma(\langle \lambda, \alpha^\vee \rangle + \frac{1}{2} k_{\frac{1}{2} \alpha})}{\Gamma(\langle \lambda, \alpha^\vee \rangle + \frac{1}{2} k_{\frac{1}{2} \alpha} + k_\alpha)},
\end{equation}
where $k_{\frac{1}{2} \alpha} = 0$ if $\frac{1}{2} \alpha \not \in \Phi$.  Observe that if $\Phi$ is the root system of a compact Lie algebra $\gog$, we have $\tilde c(\lambda, \vec 1) = \Delta_\gog(\lambda)^{-1}.$  Set $$c(\lambda, k) = \frac{\tilde c(\lambda, k)}{\tilde c(\rho^{(k)}, k)}.$$
We then have the following relation between Heckman--Opdam polynomials and hypergeometric functions \cite[eq.~4.4.10]{HS}:
\begin{equation} \label{eqn:P-to-F}
F_{k,\lambda + \rho^{(k)}}(x) = c(\lambda + \rho^{(k)}, k) \, P_{k,\lambda}(x), \qquad x \in V, \ \lambda \in P^+.
\end{equation}
This relation generalizes the relation between the spherical functions $\phi^-_{-i(\lambda - \rho)}$ and $\phi^+_\lambda$ discussed in Section \ref{sec:cpt-sym}.  It immediately yields the following proposition.

\begin{proposition} \label{prop:HO-majorization}
Let $\lambda, \mu \in P^+$.  If Conjecture \ref{conj:HGF-majorization} holds, then the following are equivalent:
\begin{enumerate}
\item $\lambda \succeq \mu$,
\item $\tilde c(\lambda + \rho^{(k)}, k) P_{k,\lambda}(x) \ge \tilde c(\mu + \rho^{(k)}, k) P_{k,\mu}(x)$ for all $x \in V$.
\end{enumerate}
\end{proposition}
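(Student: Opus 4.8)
The plan is to reduce condition (ii) directly to the hypergeometric inequality governed by Conjecture~\ref{conj:HGF-majorization}, using the normalization identity (\ref{eqn:P-to-F}). First I would rewrite the two sides of (ii). Substituting $c(\lambda + \rho^{(k)}, k) = \tilde c(\lambda + \rho^{(k)}, k)/\tilde c(\rho^{(k)}, k)$ into (\ref{eqn:P-to-F}) and rearranging gives $\tilde c(\lambda + \rho^{(k)}, k)\, P_{k,\lambda}(x) = \tilde c(\rho^{(k)}, k)\, F_{k,\lambda+\rho^{(k)}}(x)$ for all $x \in V$, and likewise with $\mu$ in place of $\lambda$. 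The crucial feature is that the $\lambda$-dependent factor $\tilde c(\lambda + \rho^{(k)}, k)$ cancels against the $\tilde c$ in the denominator of $c$, leaving only the common constant $\tilde c(\rho^{(k)}, k)$. Hence the two sides of (ii) are exactly $\tilde c(\rho^{(k)}, k)\, F_{k,\lambda+\rho^{(k)}}$ and $\tilde c(\rho^{(k)}, k)\, F_{k,\mu+\rho^{(k)}}$.

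Next I would argue that $\tilde c(\rho^{(k)}, k)$ is a strictly positive constant. By (\ref{eqn:tilde-c-def}) it is a finite product of ratios $\Gamma(a_\alpha)/\Gamma(a_\alpha + k_\alpha)$ with $a_\alpha = \langle \rho^{(k)}, \alpha^\vee \rangle + \tfrac12 k_{\frac12\alpha}$, and the Gamma function is positive on the positive reals; the positivity of each $a_\alpha$ follows from $k \ge 0$ together with the dominance of $\rho^{(k)}$, since $\langle \rho^{(k)}, \alpha_i^\vee \rangle = k_{\alpha_i} + 2 k_{2\alpha_i} \ge 0$ on the simple roots. Dividing inequality (ii) through by this positive constant, I conclude that (ii) is equivalent to $F_{k,\lambda+\rho^{(k)}}(x) \ge F_{k,\mu+\rho^{(k)}}(x)$ for all $x \in V$. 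Applying the assumed Conjecture~\ref{conj:HGF-majorization} to the two points $\lambda + \rho^{(k)}, \mu + \rho^{(k)} \in V$, this is in turn equivalent to $\lambda + \rho^{(k)} \succeq \mu + \rho^{(k)}$.

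It then remains to verify that $\lambda + \rho^{(k)} \succeq \mu + \rho^{(k)}$ if and only if $\lambda \succeq \mu$, and I expect this to be the one step requiring genuine (if brief) argument, i.e. the crux of the proof. Since $\lambda, \mu \in P^+$ are dominant and $\rho^{(k)}$ is dominant, and the closed dominant chamber is a convex cone closed under addition, all four of $\lambda, \mu, \lambda + \rho^{(k)}, \mu + \rho^{(k)}$ are dominant. As recorded in Section~\ref{sec:majorization}, for dominant vectors $W$-majorization coincides with the height order, so for dominant $\nu, \nu'$ one has $\nu \succeq \nu'$ precisely when $\nu - \nu'$ is a nonnegative combination of positive roots. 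Because $(\lambda + \rho^{(k)}) - (\mu + \rho^{(k)}) = \lambda - \mu$, the two majorization relations reduce to the identical condition on $\lambda - \mu$, which yields the equivalence. Chaining the implications together, namely (ii) $\Leftrightarrow$ $F_{k,\lambda+\rho^{(k)}} \ge F_{k,\mu+\rho^{(k)}}$ $\Leftrightarrow$ $\lambda + \rho^{(k)} \succeq \mu + \rho^{(k)}$ $\Leftrightarrow$ $\lambda \succeq \mu$, completes the proof. The only technical point to watch is the positivity and finiteness (absence of Gamma poles) of $\tilde c(\rho^{(k)}, k)$, which is immediate for strictly positive $k$ and can be treated at the boundary by continuity, with the fully degenerate case $k \equiv 0$—where $P_{0,\lambda} = M_\lambda$ and the statement becomes Muirhead's inequality—handled separately.
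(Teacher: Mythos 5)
Your proof is correct and is essentially the paper's own argument: the paper derives Proposition \ref{prop:HO-majorization} as an immediate consequence of the normalization identity (\ref{eqn:P-to-F}), which is precisely your reduction of condition (ii) to the inequality $F_{k,\lambda+\rho^{(k)}}(x) \ge F_{k,\mu+\rho^{(k)}}(x)$ followed by an application of Conjecture \ref{conj:HGF-majorization} at the shifted points. The two details you supply that the paper leaves implicit --- the positivity of the common constant $\tilde c(\rho^{(k)},k)$ and the equivalence $\lambda + \rho^{(k)} \succeq \mu + \rho^{(k)} \Leftrightarrow \lambda \succeq \mu$ for dominant $\lambda,\mu$ (via the paper's Section \ref{sec:majorization} footnote identifying $W$-majorization with the translation-invariant height order on dominant vectors, using that $\rho^{(k)}$ is dominant since $\langle \rho^{(k)}, \alpha_i^\vee\rangle = k_{\alpha_i} + 2k_{2\alpha_i} \ge 0$) --- are both handled correctly.
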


In other words, Conjecture \ref{conj:HGF-majorization} would imply a generalization of Proposition \ref{prop:compact-majorization} to the case of Heckman--Opdam polynomials.  A similar generalization of Proposition \ref{prop:euclidean-majorization} to generalized Bessel functions would also follow immediately via the rational limit in Example \ref{ex:gen-bessel}.

It is well known that Heckman--Opdam polynomials can be realized as a limit of Macdonald polynomials \cite{Mac}.  It is interesting to speculate about whether Conjecture \ref{conj:HGF-majorization}, if it holds, is itself a manifestation of an even more general monotonicity property of Macdonald polynomials with respect to $W$-majorization.

\section*{Acknowledgements}
Colin McSwiggen would like to thank Patrick McSwiggen for helpful discussions during the preparation of this manuscript.  The work of Colin McSwiggen is partially supported by the National Science Foundation under Grant No.~DMS 1714187.

The work of Jonathan Novak is partially supported by a Lattimer Fellowship, as well as by the Natural Science Foundation under Grant No.~DMS 1812288.

Both authors would like to thank the anonymous referee for helpful comments.

\bibliographystyle{alpha}

\end{document}